\def\bN {\mathbf{N}}
\def\bR {\mathbf{R}}
\def\fS {\mathfrak{S}}
\def\fZ {\mathfrak{Z}}
\def\cE {\mathcal{E}}
\def\cK {\mathcal{K}}
\def\cM {\mathcal{M}}
\def\cP {\mathcal{P}}
\def\cX {\mathcal{X}}
\def\scrL{\mathscr{L}}
\def\a {{\alpha}}
\def\b {{\beta}}
\def\de {{\delta}}
\def\th {{\theta}}
\def\l {{\lambda}}
\def\si {{\sigma}}
\def\om {{\omega}}
\def\d {{\partial}}
\def\grad {{\nabla}}
\def\rstr {{\big |}}
\def\la {\langle}
\def\ra {\rangle}
\def \La {\bigg\langle}
\def \Ra {\bigg\rangle}
\newcommand{\Div}{\operatorname{div}}
\newcommand{\Span}{\operatorname{span}}
\newcommand{\Dist}{\operatorname{dist}}
\newcommand{\DDist}{\operatorname{Dist}}
\newcommand{\Lip}{\operatorname{Lip}}
\newcommand{\ba}{\begin{aligned}}
\newcommand{\ea}{\end{aligned}}
\newcommand{\be}{\begin{equation}}
\newcommand{\ee}{\end{equation}}
\newcommand{\lb}{\label}
\newtheorem{theorem}{Theorem}[section]
\newtheorem{corollary}{Corollary}
\newtheorem{lemma}[theorem]{Lemma}
\newtheorem{proposition}{Proposition}
\theoremstyle{definition}
\begin{document}

\title{Empirical measures and Vlasov Hierarchies}

\author[F. Golse]{Fran\c cois Golse}
\address[F.G.]{Ecole Polytechnique, Centre de Math\'ematiques L. Schwartz, 91128 Palaiseau Cedex, France}
\email{francois.golse@math.polytechnique.fr}

\author[C. Mouhot]{Cl\'ement Mouhot}
\address[C.M.]{University of Cambridge, DPMMS, Centre for Mathematical Sciences, Wilberforce road, Cambridge CB30WA, United Kingdom}
\email{C.Mouhot@dpmms.cam.ac.uk}

\author[V. Ricci]{Valeria Ricci}
\address[V.R.]{Dipartimento di Matematica e Informatica, Universit\`a di Palermo, Via Archirafi 34, 90123 Palermo Italy}
\email{valeria.ricci@unipa.it}

\begin{abstract}
The present note reviews some aspects of the mean field limit for Vlasov type equations with Lipschitz continuous interaction kernel. We discuss in particular the connection between the approach 
involving the $N$-particle empirical measure and the formulation based on the BBGKY hierarchy. This leads to a more direct proof of the quantitative estimates on the propagation of chaos 
obtained on a more general class of interacting systems in [S.Mischler, C. Mouhot, B. Wennberg, arXiv:1101.4727]. Our main result is a stability estimate on the BBGKY hierarchy uniform in the 
number of particles, which implies a stability estimate in the sense of the Monge-Kantorovich distance with exponent $1$ on the infinite mean field hierarchy. This last result amplifies Spohn's 
uniqueness theorem [H. Spohn, Math. Meth. Appl. Sci. \textit{3} (1981), 445--455].
\end{abstract}

\keywords{Vlasov type equation, Mean field limit, Empirical measure, BBGKY hierarchy, Monge-Kantorovich distance}

\subjclass{82C05 (35F25, 28A33)}

\maketitle

\rightline{\emph{In memory of our friend and colleague Seiji Ukai (1940-2012)}}
\bigskip


\section*{Introduction}


Mean field evolution PDEs are an important class of models in non equibrium statistical mechanics. Perhaps the main example in this class of models is the Vlasov-Poisson system. It takes the 
form
\begin{equation}\label{VlasovP}
(\d_t+\xi\cdot\grad_x)f(t,x,\xi)+E[f](t,x)\cdot\grad_\xi f(t,x,\xi)=0\,,\quad x,\xi\in\bR^3\,,
\end{equation}
where $f\equiv f(t,x,\xi)\ge 0$ is the (unknown) distribution function of a particle system (in other words, the number density at time $t$ of particles located at the position $x$ with velocity $\xi$), 
while
$$
E[f](t,x):=-\iint_{\bR^3\times\bR^3}\grad V(x-y)f(t,x,\xi)d\xi dy\,.
$$
The Vlasov-Poisson system is a fundamental model in plasma physics \cite{KrallTriv}; it is obtained by specializing (\ref{VlasovP}) to the case where $V$ is the Coulomb potential
$$
V(x):=\frac1{4\pi|x|}\,,\qquad x\in\bR^3\setminus\{0\}\,.
$$
In that case, $f$ is the distribution function of a system of identical charged point particles, and $E[f]$ is the self-consistent electrostatic field created by those particles.

Another important example is the vorticity formulation of the Euler equation for incompressible fluids in space dimension $d=2$, which takes the form
\begin{equation}\label{Euler2D}
\d_t\om(t,x)+\Div_x(\om u)(t,x)=0\,,\quad x\in\bR^2\,.
\end{equation}
Here $\om\equiv\om(t,x)\in\bR$ is the unknown vorticity field, while
$$
u(t,x):=\int_{\bR^2}K(x-y)\om(t,y)dy
$$
is the velocity field. The integral kernel $K$ is given by the formula
\begin{equation}\label{KEuler}
K(z):=\tfrac1{2\pi}J\grad\ln|z|\,,\quad\hbox{ with }J:=\left(\begin{matrix}0&-1\\1&0\end{matrix}\right)\,.
\end{equation}
While the vorticity field $\om$ is in general of indefinite sign and cannot be viewed as a density of particles, applying the methods of statistical mechanics to a gas of vortices provides valuable 
information on the dynamics of incompressible, inviscid fluids in space dimension $2$ \cite{Onsa}.

A fundamental question in nonequilibrium statistical mechanics is to derive mean field PDEs such as (\ref{VlasovP}) or (\ref{Euler2D}) rigorously from the dynamics of finite particle systems in 
some appropriate limit. This remains an open problem at the time of this writing, at least for interactions such as the Coulomb potential $V$ or the Biot-Savart kernel $K$, which are both singular 
at the origin. The case of interactions with a singularity at the origin weaker than either the Coulomb potential or the Biot-Savart kernel has been recently considered in \cite{HauJab1,HauJab2}. 
In the case of regularized interactions, the corresponding limits have been established rigorously already some time ago \cite{Neunzert,BraunHepp,Dobrushin79}. 

There are two different ways of handling this problem. 

One can prove that the empirical measure of a system of $N$ identical, interacting particles converges to the solution to the mean field PDE as $N\to\infty$: this is the approach used in 
\cite{Neunzert,BraunHepp,Dobrushin79}. Alternately, one can try to use BBGKY hierarchies and establish the propagation of chaos in the limit $N\to\infty$: see \cite{Spohn}.

In this short note, we explain how both approaches are related in the case of smooth interaction kernels (see Theorem \ref{T-Chaotic}), and obtain a stability estimate on the BBGKY hierarchy 
that is uniform in the number of particles (see Theorem \ref{T-StabBBGKY}). As a consequence, we prove the continuous dependence on initial data of statistical solutions of the mean field PDE, 
with an estimate in some appropriate Monge-Kantorovich distance, thereby amplifying Spohn's uniqueness theorem for solutions of the infinite Vlasov hierarchy in \cite{Spohn}. (The notion of
statistical solution of the mean field equation will be recalled at the end of section \ref{S-ContDep}.)

Seiji Ukai contributed several famous results in the theory of PDEs. For instance, his note \cite{Ukai74} is the first global existence and uniqueness theorem on the Cauchy problem for the 
Boltzmann equation. In addition to his impressive work on the Boltzmann equation, Seiji Ukai is also at the origin of the regularity theory for the Vlasov-Poisson system \cite{UkaiOkabe}. More
recently, Seiji Ukai also gave a striking interpretation of the derivation of the Boltzmann equation from the $N$-body problem in classical mechanics in terms of the Nirenberg-Ovsyannikov 
abstract variant of the Cauchy-Kovalevskaya theorem \cite{UkaiCK}. The present paper discusses different aspects of the analogous question for Vlasov type equations, and is dedicated to 
Seiji Ukai's memory, in recognition of his considerable influence on the field of kinetic models.


\section{Vlasov Equations and Mean-Field Limit}\lb{S-Empir}


Let $K:\,\bR^d\times\bR^d\to\bR^d$ be a continuous map satisfying
\begin{equation}\label{K0}
K(z,z')+K(z',z)=0\,,\quad z,z'\in\bR^d\,,
\end{equation}
and the Lipschitz condition
\begin{equation}\label{LipK}
\left\{
\begin{aligned}
\sup_{z'\in\bR^d}|K(z_1,z')\!-\!K(z_2,z')|\le L|z_1-z_2|\,,
\\
\sup_{z\in\bR^d}\,|\,K(z,z_1)-K(z,z_2)|\le L|z_1-z_2|\,.
\end{aligned}
\right.
\end{equation}
These two conditions imply in particular that
\begin{equation}\label{BoundK}
|K(z,z')|\le L|z-z'|\le L(|z|+|z'|)\,,\quad z,z'\in\bR^d\,.
\end{equation}

Consider a system of $N$ identical particles, the state of the $k$th particle at time $t$ being defined by $z_k(t)\in\bR^d$. Assume that the evolution of this system of particles is governed by the 
$N$-body ODE system
\begin{equation}\label{NODE}
\left\{
\begin{aligned}
{}&\dot{z}_k(t)=\frac1N\sum_{l=1}^NK(z_k(t),z_l(t)) \,,\quad k=1,\ldots,N\,,\quad t\in\bR\,,
\\
&z_k(0)=z_k^{in}\,.
\end{aligned}
\right.
\end{equation}
We have assumed a mean field scaling: the interaction between two particles in the system is of order $1/N$, so that the collective action of the particle system on each particle is of order unity.

Together with the $N$-body ODE system (\ref{NODE}), we consider the mean field Vlasov equation
\begin{equation}\label{VlasovK}
\left\{
\begin{aligned}
{}&\d_tf(t,z)+\Div_z(f(t,z)\cK f(t,z))=0\,,\quad z\in\bR^d\,,\,\,t\in\bR\,,
\\
&f\rstr_{t=0}=f^{in}\,,
\end{aligned}
\right.
\end{equation}
where $f^{in}\in L^1(\bR^d;(1+|z|)dz)$. In equation (\ref{VlasovK}), we abuse the notation $\cK f(t,z)$ to designate $(\cK f(t,\cdot))(z)$, where 
\begin{equation}\label{DefKphi}
\cK\phi(z):=\int_{\bR^d}K(z,z')\phi(z')dz'\,,\quad\phi\in L^1(\bR^d;(1+|z|)dz) \,.
\end{equation}

The vorticity formulation of the incompressible Euler equation (\ref{Euler2D}) in space dimension $2$ is an obvious example of mean field equation (\ref{VlasovK}), with integral kernel given 
by (\ref{KEuler}).

The Vlasov equation (\ref{VlasovP}) can also be put in this form. In that case, $d=6$ and the $N$-body ODE system (\ref{NODE}) is the system of Hamilton's equations 
$$
\dot{\xi}_k=-\d_{x_k}H_N(x_1,\xi_1,\ldots,x_N,\xi_N)\,,\quad\dot{x}_k=\d_{\xi_k}H_N(x_1,\xi_1,\ldots,x_N,\xi_N)\,,
$$
where the Hamiltonian $H_N$ is defined on $(\bR^6)^N$ by the formula
\begin{equation}\label{NHamilt}
H_N(x_1,\xi_1,\ldots,x_N,\xi_N):=\tfrac12\sum_{k=1}^N|\xi_k|^2+\frac1N\sum_{k,l=1}^NV(x_k-x_l)\,.
\end{equation}
Denoting $z_k=(x_k,\xi_k)\in\bR^3\times\bR^3\simeq\bR^6$ the $k$-th pair of conjugate variables, the interaction kernel $K$ in this case is given by 
\begin{equation}\label{KVlasov}
K((x,\xi),(x',\xi'))=(\xi-\xi',-\grad V(x-x'))
\end{equation}
and satisfies the assumptions (\ref{K0}) and (\ref{LipK}) if and only if 
\begin{equation}\label{HypV}
V\in C^1(\bR^3)\,,\quad\grad V\in\Lip(\bR^3;\bR^3)\hbox{ and }\grad V(z)+\grad V(-z)=0\hbox{ for all }z\in\bR^3\,.
\end{equation}
Since the total mass and momentum
$$
\iint_{\bR^3\times\bR^3}f(t,x,\xi)dxd\xi\quad\hbox{ and }\iint_{\bR^3\times\bR^3}\xi f(t,x,\xi )dxd\xi 
$$
are invariant under the dynamics defined by (\ref{VlasovP}), the mean field PDE (\ref{VlasovK}) coincides with (\ref{VlasovP}) for distribution functions $f$ such that
$$
\iint_{\bR^3\times\bR^3}f(0,x,\xi)dxd\xi=1\quad\hbox{ and }\iint_{\bR^3\times\bR^3}\xi f(0,x,\xi)dxd\xi=0\,.
$$
Notice that neither the vorticity formulation of the incompressible Euler equation in space dimension $2$ nor the Vlasov-Poisson system satisfy the Lipschitz condition (\ref{LipK}) on their 
interaction kernels $K$, because of the singularity of the Biot-Savart kernel or of the Coulomb potential at the origin. However, both these equations do satisfy the antisymmetry condition 
(\ref{K0}) on their interaction kernels.

Condition (\ref{BoundK}) implies that the differential system (\ref{NODE}) has a global solution defined for all $t\in\bR$, denoted by
$$
(z_1(t),\ldots,z_N(t))=:T^N_t(z_1^{in}\ldots,z_N^{in})
$$
for all initial data $z_1^{in},\ldots,z_N^{in}\in\bR^d$. Henceforth, we denote
$$
Z_N:=(z_1,\ldots,z_N)\,,\qquad Z_N^{in}:=(z_1^{in}\ldots,z_N^{in})\,.
$$

To each $Z_N\in(\bR^d)^N$ we associate the empirical measure
\begin{equation}\label{EmpirM}
\mu_{Z_N}:=\frac1N\sum_{k=1}^N\de_{z_k}\,,
\end{equation}
and, for each $t\in\bR$, we define
\begin{equation}\label{EmpirMt}
\mu_{N}(t):=\mu_{T^N_tZ_N^{in}}\,,\quad t\in\bR\,.
\end{equation}
For each $Z_N^{in}\in(\bR^d)^N$, the time-dependent empirical measure $\mu_N$ defined in (\ref{EmpirMt}) belongs to $C(\bR_+,w-\cP_1(\bR^d))$, where $\cP_r(\bR^d)$ designates, 
for each $r>0$, the set of Borel probability measures $\mu$ on $\bR^d$ such that
\begin{equation}\label{DefPs}
\int_{\bR^d}|z|^r\mu(dz)<\infty\,.
\end{equation}

A remarkable property of the $N$-body ODE (\ref{NODE}) is that the time-dependent empirical measure $\mu_N$ is a weak solution to the Vlasov equation (\ref{VlasovK}), where the 
operator $\cK$ in (\ref{DefKphi}) is extended to $\cP_1(\bR^d)$ by the formula
\begin{equation}\label{DefKmu}
\cK\mu(z)=\int_{\bR^d}K(z,z')\mu(dz')\,.
\end{equation}

We first recall the following important result (proved in the case (\ref{KVlasov})).

\begin{proposition}[Dobrushin \cite{Dobrushin79}]\label{P-Dobru}
For all $\mu^{in}\in\cP_1(\bR^d)$, the Cauchy problem
\begin{equation}\label{VlasovMut}
\left\{
\begin{aligned}
{}&\d_t\mu(t)+\Div(\mu(t)\cK\mu(t))=0\,,\quad t\in\bR\,,
\\
&\mu(0)=\mu^{in}\,,
\end{aligned}
\right.
\end{equation}
has a unique weak solution $\mu\in C(\bR;w-\cP_1(\bR^d))$. Moreover

\smallskip
\noindent
(a) For all $Z_N^{in}\in(\bR^d)^N$, the unique solution to (\ref{VlasovMut}) in $C(\bR;w-\cP_1(\bR^d))$ with initial data $\mu_{Z_N^{in}}$ is the time dependent empirical measure $\mu_N(t)$ 
defined by (\ref{EmpirMt}).

\noindent
(b) Let $\mu$ and $\nu$ be the solutions of (\ref{VlasovMut}) in $C(\bR;w-\cP_1(\bR^d))$ with initial data respectively $\mu^{in}$ and $\nu^{in}$. Then the Monge-Kantorovich distance 
$\Dist_{MK,1}$ between $\mu(t)$ and $\nu(t)$ satisfies the inequality
\begin{equation}\label{Dobr<}
\Dist_{MK,1}(\mu(t),\nu(t))\le e^{2L|t|}\Dist_{MK,1}(\mu^{in},\nu^{in})\,.
\end{equation}

\noindent
(c) If $\mu^{in}$ is absolutely continuous with respect to the Lebesgue measure $\scrL^d$ of $\bR^d$, then the solution $\mu(t)$ of (\ref{VlasovMut}) is also absolutely continuous with respect 
to $\scrL^d$ for all $t\in\bR$. Thus $\mu(t)$ is of the form $\mu(t)=f(t,\cdot)\scrL^d$ for all $t\in\bR$, where $f$ is the solution to (\ref{VlasovK}).
\end{proposition}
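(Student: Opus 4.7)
The plan is to build everything on the characteristic flow associated with the velocity field $\cK\mu(t,\cdot)$. For any $\mu\in C(\bR;w-\cP_1(\bR^d))$, condition (\ref{LipK}) implies that $z\mapsto\cK\mu(t,z)$ is $L$-Lipschitz uniformly in $t$, while (\ref{BoundK}) together with $\mu(t)\in\cP_1$ gives linear growth at infinity. Hence the non-autonomous ODE $\dot Z=\cK\mu(t,Z)$ has a unique global flow $Z^\mu(t,s,\cdot):\bR^d\to\bR^d$, which is bi-Lipschitz for each pair $t,s$. Standard transport arguments show that any weak solution of (\ref{VlasovMut}) is the pushforward $\mu(t)=Z^\mu(t,0,\cdot)_\#\mu^{in}$, and conversely this formula defines a weak solution whenever $\mu$ is a fixed point of the map $\Phi:\mu\mapsto(t\mapsto Z^\mu(t,0,\cdot)_\#\mu^{in})$. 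Existence and uniqueness will follow once the stability estimate in (b) is established, since that estimate applied to two solutions with $\mu^{in}=\nu^{in}$ gives uniqueness, while applying it to sequences of empirical approximations of a general $\mu^{in}\in\cP_1$ provides a Cauchy sequence in $C([-T,T];w-\cP_1)$ that yields existence.

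For (a), plug $\mu^{in}=\mu_{Z_N^{in}}$ into the characteristic picture. Because $\mu_{Z_N^{in}}$ is supported on the finite set $\{z_1^{in},\ldots,z_N^{in}\}$, the pushforward $Z^\mu(t,0,\cdot)_\#\mu_{Z_N^{in}}$ is $\frac1N\sum_k\de_{Z^\mu(t,0,z_k^{in})}$, and along each characteristic one has $\dot Z^\mu(t,0,z_k^{in})=\cK\mu(t)(Z^\mu(t,0,z_k^{in}))=\frac1N\sum_l K(Z^\mu(t,0,z_k^{in}),Z^\mu(t,0,z_l^{in}))$, which is exactly the $N$-body system (\ref{NODE}). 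Hence the fixed point of $\Phi$ starting from $\mu_{Z_N^{in}}$ coincides with the empirical measure $\mu_N(t)$, which proves (a) together with uniqueness in this particular class.

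The crux is (b), and the key idea is to transport an optimal coupling by the \emph{joint} flow. Let $\pi^{in}\in\cP_1(\bR^d\times\bR^d)$ be an optimal coupling of $\mu^{in}$ and $\nu^{in}$ for $\Dist_{MK,1}$, and set $\pi(t):=(Z^\mu(t,0,\cdot),Z^\nu(t,0,\cdot))_\#\pi^{in}$, which by construction is a coupling of $\mu(t)$ and $\nu(t)$. Writing $\cK\mu(t,Z^\mu)-\cK\nu(t,Z^\nu)=\int[K(Z^\mu,w)-K(Z^\nu,w')]\pi(t,dw\,dw')$ and using (\ref{LipK}) in both arguments, one gets
\begin{equation*}
|\dot Z^\mu(t,0,z)-\dot Z^\nu(t,0,z')|\le L|Z^\mu(t,0,z)-Z^\nu(t,0,z')|+L\int|w-w'|\pi(t,dw\,dw').
\end{equation*}
Integrating this against $\pi^{in}(dz\,dz')$ and observing that, by definition of $\pi(t)$, one has $\int|w-w'|\pi(t,dw\,dw')=\int|Z^\mu(t,0,z)-Z^\nu(t,0,z')|\pi^{in}(dz\,dz')$, the scalar quantity $g(t):=\int|Z^\mu(t,0,z)-Z^\nu(t,0,z')|\pi^{in}(dz\,dz')$ satisfies $g(t)\le g(0)+2L\int_0^t g(s)\,ds$, so Gronwall yields $g(t)\le e^{2L|t|}\Dist_{MK,1}(\mu^{in},\nu^{in})$. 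Since $\pi(t)$ is admissible, $\Dist_{MK,1}(\mu(t),\nu(t))\le g(t)$, and (\ref{Dobr<}) follows. The main obstacle here is essentially notational: one must use the same flow to transport the coupling, so that the transported optimal cost controls itself, closing the Gronwall loop.

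For (c), it suffices to note that the flow $Z^\mu(t,0,\cdot)$ is bi-Lipschitz, so its Jacobian is bounded above and below by constants depending only on $L$ and $|t|$; therefore the pushforward of $\mu^{in}=f^{in}\scrL^d$ is absolutely continuous with respect to $\scrL^d$ for every $t$, and $\mu(t)=f(t,\cdot)\scrL^d$ with $f$ satisfying (\ref{VlasovK}) through the standard change of variables along characteristics.
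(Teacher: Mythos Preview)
Your sketch is essentially the classical Dobrushin argument and is correct. Note, however, that the paper does not actually prove Proposition~\ref{P-Dobru}: it explicitly refers the reader to Dobrushin's original article \cite{Dobrushin79} for the proof, and only supplies the a priori moment estimate (\ref{1stMom}) (which is not part of the proposition proper but is used later for tightness). So there is no ``paper's own proof'' to compare against beyond this attribution; your write-up is precisely the kind of characteristic-flow-and-coupling-transport argument that Dobrushin's paper contains.

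Two minor remarks on your presentation. First, your bootstrapping of existence from (b) via empirical approximations is legitimate but slightly roundabout: one must check that the $\Dist_{MK,1}$-limit of the empirical solutions is indeed a weak solution of (\ref{VlasovMut}), which requires a separate (easy) stability argument for the equation itself. The more self-contained route, closer to \cite{Dobrushin79}, is a Picard iteration on the map $\Phi$ you defined, using the same Gronwall estimate to get contraction on short time intervals. Second, in (c) you invoke a bounded Jacobian; what you actually need (and have) is that a bi-Lipschitz map sends Lebesgue null sets to Lebesgue null sets, which already gives absolute continuity of the pushforward without any regularity of $\Div_z\cK\mu$.
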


\smallskip
We recall that, for all $r>0$, the Monge-Kantorovich distance $\Dist_{MK,r}$  is defined on $\cP_r(\bR^d)$ by the formula
$$
\Dist_{MK,r}(\mu,\nu)=\inf_{\pi\in\Pi(\mu,\nu)}\left(\iint_{\bR^d\times\bR^d}|x-y|^r\pi(dxdy)\right)^{1/r}\,,
$$
where $\Pi(\mu,\nu)$ is the set of Borel probability measures on $\bR^d\times\bR^d$ such that
$$
\iint_{\bR^d\times\bR^d}(\phi(x)+\psi(y))\pi(dxdy)=\int_{\bR^d}\phi(x)\mu(dx)+\int_{\bR^d}\psi(y)\nu(dy)
$$
for all $\phi,\psi\in C_b(\bR^d)$. In the particular case $r=1$, 
\begin{equation}\label{DualKR}
\Dist_{MK,1}(\mu,\nu)=\sup_{\Lip(\phi)\le 1}\left|\int_{\bR^d}\phi(z)\mu(dz)-\int_{\bR^d}\phi(z)\nu(dz)\right|\,,
\end{equation}
where 
$$
\Lip(\phi):=\sup_{x\not=y\in\bR^d}\frac{|\phi(x)-\phi(y)|}{|x-y|}\,.
$$
(See Theorems 1.14 and 7.3 (i) in \cite{VillaniTOT}.)

The interested reader is referred to the original article \cite{Dobrushin79} for a proof of Proposition \ref{P-Dobru}. We just sketch below the argument for obtaining the upper bound on moments 
of weak solutions of the Vlasov equation (\ref{VlasovMut}). For each $r\ge 1$, one has
\begin{equation*}
\begin{aligned}
\frac{d}{dt}\int_{\bR^d}|z|^r\mu(t,dz)&=r\int_{\bR^d}|z|^{r-2}z\cdot\cK\mu(t,z)\mu(t,dz)
\\
&\le r\iint_{\bR^d\times\bR^d}|z|^{r-1}|K(z,z')|\mu(t,dz)\mu(t,dz')
\\
&\le Lr\iint_{\bR^d\times\bR^d}|z|^{r-1}(|z|+|z'|)\mu(t,dz)\mu(t,dz')
\\
&\le L\iint_{\bR^d\times\bR^d}((2r-1)|z|^r+|z'|^r)\mu(t,dz)\mu(t,dz')
\\
&=2Lr\int_{\bR^d}|z|^r\mu(t,dz)\,. 
\end{aligned}
\end{equation*}
(Here we have used the elementary inequality
\begin{equation}\label{CvxExp}
a^{(r-1)}b\le(1-\tfrac1r)a^r+\tfrac1rb^r\,,\quad a,b>0\,,\,\,r\ge 1\,,
\end{equation}
which is a consequence of the convexity of $z\mapsto e^z$.) By Gronwall's lemma, 
\begin{equation}\label{1stMom}
\int_{\bR^d}|z|^r\mu(t,dz)\le e^{2Lr|t|}\int_{\bR^d}|z|^r\mu^{in}(dz)\,,\quad t\in\bR\,.
\end{equation}

\smallskip
A consequence of Dobrushin's result is that the Vlasov equation (\ref{VlasovK}) governs the mean field limit of the $N$-particle ODE system (\ref{NODE}), a result already established in 
\cite{BraunHepp}\footnote{The argument in \cite{BraunHepp} involves a distance very similar to $\Dist_{MK,1}$, defined by the Kantorovich-Rubinstein formula (\ref{DualKR}), where the 
supremum is taken on the set of all bounded Lipschitz continuous functions $\phi$ such that $\|\phi\|_{L^\infty}+\Lip(\phi)\le 1$: see formulas (2.8-9) in \cite{BraunHepp}. However the proof 
of the mean field limit (Theorem 3.1 in \cite{BraunHepp}) includes a weak convergence argument on the initial data that is not quantitative and differs from Dobrushin's.} --- see also the 
earlier reference \cite{Neunzert}. 

\begin{corollary}
Let $f^{in}$ be a probability density on $\bR^d$ satisfying $f^{in}\in L^1(\bR^d,|z|dz)$. Let $\fZ$ denote a map associating to each $N\in\bN^*$ an $N$-tuple $\fZ(N)$ of $\bR^d$ such that
$$
\mu_{\fZ(N)}\to f^{in}\scrL^d\hbox{ weakly, and }\int_{\bR^d}|z|\mu_{\fZ(N)}(dz)\to\int_{\bR^d}|z|f^{in}(z)dz
$$
as $N\to\infty$. Then the sequence of solutions $T^N_t\fZ(N)$ of the $N$-particle ODE system (\ref{NODE}) satisfies
$$
\mu_{T^N_t\fZ(N)}\to f(t,\cdot)\scrL^d\hbox{ weakly as }N\to\infty\,.
$$
\end{corollary}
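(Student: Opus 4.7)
The plan is to recast both the empirical measure $\mu_{T^N_t\fZ(N)}$ and the target $f(t,\cdot)\scrL^d$ as solutions of the same Cauchy problem (\ref{VlasovMut}), differing only in their initial data, and then to invoke the Dobrushin stability estimate (\ref{Dobr<}). First I would observe that the hypothesis $\int|z|\mu_{\fZ(N)}(dz)\to\int|z|f^{in}(z)dz$ in particular gives $\mu_{\fZ(N)}\in\cP_1(\bR^d)$ for every $N$, so that Proposition \ref{P-Dobru}(a) applies and identifies $t\mapsto\mu_{T^N_t\fZ(N)}$ as the unique $C(\bR;w-\cP_1(\bR^d))$ solution of (\ref{VlasovMut}) with initial datum $\mu_{\fZ(N)}$. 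On the other hand, since $f^{in}\scrL^d\in\cP_1(\bR^d)$ is absolutely continuous, Proposition \ref{P-Dobru}(c) ensures that $t\mapsto f(t,\cdot)\scrL^d$ is the unique $C(\bR;w-\cP_1(\bR^d))$ solution of (\ref{VlasovMut}) with initial datum $f^{in}\scrL^d$.

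Next I would apply the stability estimate (\ref{Dobr<}) of Proposition \ref{P-Dobru}(b) to these two solutions, yielding
$$
\Dist_{MK,1}\big(\mu_{T^N_t\fZ(N)},\,f(t,\cdot)\scrL^d\big)\le e^{2L|t|}\,\Dist_{MK,1}\big(\mu_{\fZ(N)},\,f^{in}\scrL^d\big)
$$
for every $t\in\bR$ and every $N\in\bN^*$. It then suffices to prove that the right-hand side tends to $0$ as $N\to\infty$, after which convergence in $\Dist_{MK,1}$ automatically implies weak convergence (as $\Dist_{MK,1}$ is stronger than the topology of narrow convergence, in view of the dual formula (\ref{DualKR}) applied to bounded Lipschitz test functions).

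The main point is thus to convert the two assumptions on $\mu_{\fZ(N)}$ into convergence in $\Dist_{MK,1}$. This is a standard characterization: on $\cP_1(\bR^d)$, convergence in $\Dist_{MK,1}$ is equivalent to weak convergence combined with convergence of the first moment $\int|z|\mu(dz)$. I would cite this (e.g. Theorem 7.12 of \cite{VillaniTOT}) or, if a self-contained argument is desired, prove it directly: given the weak convergence $\mu_{\fZ(N)}\wto f^{in}\scrL^d$ and the convergence of moments, test against a $1$-Lipschitz function $\phi$, truncate it at radius $R$ by a continuous cut-off, use weak convergence on the bounded part, and control the tails by $2\int_{|z|>R}|z|\,\mu_{\fZ(N)}(dz)+2\int_{|z|>R}|z|f^{in}(z)dz$, which is uniformly small for $R$ large thanks to the convergence of first moments (a Vitali-type uniform integrability argument). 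Taking the supremum over $\Lip(\phi)\le 1$ via (\ref{DualKR}) then yields $\Dist_{MK,1}(\mu_{\fZ(N)},f^{in}\scrL^d)\to 0$, which is the only step requiring any real work; combined with the previous two paragraphs it completes the proof.
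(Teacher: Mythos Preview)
Your proposal is correct and follows essentially the same approach as the paper: convert the hypotheses on $\mu_{\fZ(N)}$ into $\Dist_{MK,1}$-convergence via the characterization in Theorem~7.12 of \cite{VillaniTOT}, apply Dobrushin's stability estimate (\ref{Dobr<}), and read off weak convergence at time $t$. The paper's proof is terser (it does not spell out the identification of both sides as solutions of (\ref{VlasovMut}) via Proposition~\ref{P-Dobru}(a) and (c), nor the self-contained tail argument you sketch), but the route is the same.
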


\begin{proof}
Since the distance $\Dist_{MK,1}$ metricizes the topology of weak convergence on $\cP_1(\bR^d)$ (see Theorem 7.12 in \cite{VillaniTOT}), the two conditions on $\fZ(N)$ imply that 
$$
\Dist_{MK,1}(\mu_{\fZ(N)},f^{in}\scrL^d)\to 0\quad\hbox{ as }N\to\infty\,.
$$
By statement (b) in the Proposition above, 
$$
\Dist_{MK,1}(\mu_{T^N_t\fZ(N)},f(t,\cdot)\scrL^d)\to 0\quad\hbox{ as }N\to\infty\,,
$$
where $f$ is the solution to (\ref{VlasovK}), which implies the conclusion.
\end{proof}


\section{Vlasov Hierarchy and the Mean-Field Limit}\lb{S-Hier}


Another approach to the problem of deriving the Vlasov equation (\ref{VlasovK}) as the mean field limit of the $N$-particle ODE system (\ref{NODE}) involves the Vlasov hierarchy.

Let $P_N^{in}\in\cP_1((\bR^d)^N)$ be a probability measure on the $N$-particle phase-space. All particles being identical, assume that $P_N^{in}$ is symmetric, meaning that, for each 
permutation $\si\in\fS_N$, one has\footnote{If $T:\,X\to Y$ is a measurable map and $m$ is a measure on $X$, $T\#m$ designates the push-forward of $m$ under $T$, that is a measure 
on $Y$, defined as follows: for each positive measurable function $f$ on $Y$,
$$
\int_Yf(y)T\#m(dy)=\int_Xf(T(x))m(dx)\,.
$$}
\begin{equation}\label{SymPin}
S_\si\#P_N^{in}=P_N^{in}\,,
\end{equation}
where 
\begin{equation}\label{DefSsi}
S_\si:\,(\bR^d)^N\ni(x_1,\ldots,x_N)\mapsto(x_{\si(1)},\ldots,x_{\si(N)})\in(\bR^d)^N\,. 
\end{equation}
Consider the time dependent $N$-particle probability distribution $P_N(t)$ defined by the formula 
\begin{equation}\label{DefPNt}
P_N(t):=T^N_t\#P_N^{in}\,.
\end{equation}
It is the unique solution in $C(\bR;w-\cP_1((\bR^d)^N))$ of the $N$-body Liouville equation
\begin{equation}\label{NLiouv}
\left\{
\begin{aligned}
{}&\d_tP_N(t)+\frac1N\sum_{k,l=1}^N\Div_{z_k}(P_N(t)K(z_k,z_l))=0\,,
\\
&P_N\rstr_{t=0}=P_N^{in}\,.
\end{aligned}
\right.
\end{equation}
Observe that the first order differential operator on $(\bR^d)^N$
$$
\frac1N\sum_{k,l=1}^NK(z_k,z_l)\d_{z_l}\quad\hbox{ commutes with }S_\si\,.
$$
Therefore, the symmetry (\ref{SymPin}) is propagated by the $N$-body dynamics, so that
\begin{equation}\label{SymPNt}
S_\si\#P_N(t)=P_N(t)\,,\quad\hbox{ for all }\si\in\fS_N\hbox{ and all }t\in\bR\,.
\end{equation}

For each $m=1,\ldots,N-1$, define the $m$-body marginal of $P_N(t)$ as
\begin{equation}\label{DefPNm}
\left\{
\begin{aligned}
{}&P_{N:m}(t):=p_m\#P_N(t)\,,&&\quad m=1,\ldots,N\,,
\\
&P_{N:m}(t):=0\,,&&\quad m\ge N\,,
\end{aligned}
\right.
\end{equation}
where 
\begin{equation}\label{Defpm}
p_m:\,(\bR^d)^N\ni(z_1,\ldots,z_N)\mapsto(z_1,\ldots,z_m)\in(\bR^d)^m\,.
\end{equation}
The sequence of $m$-body marginals of $P_N$ satisfies the BBGKY hierarchy of equations
\begin{equation}\label{BBGKY}
\begin{aligned}
\d_tP_{N:m}(t)&+\frac{N-m}N\sum_{k=1}^m\Div_{z_k}\left(\int_{\bR^d}K(z_k,z_{m+1})P_{N:m+1}(t,dz_{m+1})\right)
\\
&+\frac1N\sum_{k,l=1}^m\Div_{z_k}(P_{N:m}(t)K(z_k,z_l))=0\,,\qquad m\ge 1\,.
\end{aligned}
\end{equation}
Since the equation for $m=N$ in this hierarchy coincides with the $N$-body Liouville equation (\ref{NLiouv}), the BBGKY hierarchy (\ref{BBGKY}) is exactly equivalent to (\ref{NLiouv}).

Assume that $P_N^{in}=(f^{in}\scrL^d)^{\otimes N}$, where $f^{in}$ is a probability density on $\bR^d$ belonging to $L^1(\bR^d;|z|dz)$. The sequence\footnote{More precisely, this is 
the sequence indexed by $N\ge 1$ of the elements $(P_{N:m})_{m\ge 1}$ of the product space $\cX$ (as explained in formula (\ref{DefPNm}), for $m>N\ge 1$, one has $P_{N:m}=0$).} 
$((P_{N:m})_{m\ge 1})_{N\ge 1}$ is relatively compact in the product space
$$
\cX=\prod_{m\ge 1}L^\infty(\bR;\cM((\bR^d)^m))
$$
for the product topology, each factor being endowed with the weak-* topology of the dual of $L^1(\bR;C_b((\bR^d)^m))$. Each limit point of that family as $N\to\infty$ is a solution 
$(P_m)_{m\ge 1}$ of the (infinite) Vlasov hierarchy
\begin{equation}\label{VlasovInfin}
\left\{
\begin{aligned}
{}&\d_tP_{m}(t)+\sum_{k=1}^m\Div_{z_k}\left(\int_{\bR^d}K(z_k,z_{m+1})P_{m+1}(t,dz_{m+1})\right)=0\,,\qquad m\ge 1\,,
\\
&P_m\rstr_{t=0}=(f^{in}\scrL^d)^{\otimes m}\,.
\end{aligned}
\right.
\end{equation}
(See \cite{BGGM2000} for a proof of this result in the genuine Vlasov case (\ref{KVlasov}).) We just sketch below the formal argument analogous to (\ref{1stMom}) implying tightness of the 
sequence $(P_{N:m}(t))_{N\ge m}$ for $t$ and $m$ fixed, as $N\to\infty$. For each $N,r\ge 1$, one has 
\begin{equation*}
\begin{aligned}
\frac{d}{dt}\int_{(\bR^d)^N}|z_1|^rP_N(t,dZ_N)=\frac{r}N\sum_{j=1}^N\int_{(\bR^d)^N}|z_1|^{r-2}z_1\cdot K(z_1,z_j)P_N(t,dZ_N)&
\\
\le\frac{Lr}N\sum_{j=1}^N\int_{(\bR^d)^N}|z_1|^{r-1}(|z_1|+|z_j|)P_N(t,dZ_N)&
\\
=Lr\int_{(\bR^d)^N}|z_1|^rP_N(t,dZ_N)+\frac{Lr}N\sum_{j=1}^N\int_{(\bR^d)^N}|z_1|^{r-1}|z_j|P_N(t,dZ_N)&
\\
\le Lr\int_{(\bR^d)^N}|z_1|^rP_N(t,dZ_N)+\frac{L}N\sum_{j=1}^N\int_{(\bR^d)^N}((r-1)|z_1|^r+|z_j|^r)P_N(t,dZ_N)&
\\
= (2r-1)L\int_{(\bR^d)^N}|z_1|^rP_N(t,dZ_N)+\frac{L}N\sum_{j=1}^N\int_{(\bR^d)^N}|z_j|^rP_N(t,dZ_N)&
\\
=2Lr\int_{(\bR^d)^N}|z_1|^rP_N(t,dZ_N)&\,.
\end{aligned}
\end{equation*}
(The last inequality above uses again (\ref{CvxExp}), while the last equality uses the symmetry property (\ref{SymPNt}).) By Gronwall's inequality, for each $t\in\bR$ and $N\ge 1$, one has
\begin{equation}\label{rMomPNm}
\begin{aligned}
\int_{(\bR^d)^N}|z_1|^rP_N(t,dZ_N)&\le e^{2Lr|t|}\int_{(\bR^d)^N}|z_1|^rP_N(0,dZ_N)
\\
&=e^{2Lr|t|}\int_{\bR^d}|z|^rf^{in}(z)dz\,.
\end{aligned}
\end{equation}
In particular, for each $t\in\bR$, $m\in\bN^*$ and $N\ge m$, one has
\begin{equation*}
\begin{aligned}
\int_{(\bR^d)^m}\sum_{j=1}^m|z_j|^rP_{N:m}(t,dZ_m)&=\int_{(\bR^d)^N}\sum_{j=1}^m|z_j|^rP_N(t,dZ_N)
\\
&=m\int_{(\bR^d)^N}|z_1|^rP_N(t,dZ_N)
\\
&\le me^{2Lr|t|}\int_{\bR^d}|z|^rf^{in}(z)dz\,.
\end{aligned}
\end{equation*}

If $f$ is a solution to the Vlasov equation (\ref{VlasovK}), the sequence $((f\scrL^d)^{\otimes m})_{m\ge 1}$ is a solution to the infinite Vlasov hierarchy (\ref{VlasovInfin}). Thus, if one knows 
that each limit point $(P_m)_{m\ge 1}$ of the family $((P_{N:m})_{m\ge 1})_{N\ge 1}$ belongs to a functional space where the infinite Vlasov hierarchy (\ref{VlasovInfin}) has only one solution, 
one concludes that 
$$
P_{N:m}(t)\to (f(t)\scrL^d)^{\otimes m}\hbox{ weakly in }\cP((\bR^d)^m)\hbox{ as }N\to\infty
$$
for each $m\ge 1$, where $f$ is the solution to the Vlasov equation (\ref{VlasovK}). 

Uniqueness of the solution to the infinite Vlasov hierarchy has been established in the genuine Vlasov case (\ref{KVlasov}), first by Narnhofer-Sewell \cite{NarnhoSewell} in the case where 
the potential $V$ is analytic with Fourier transform $\hat V\in C_c(\bR^d)$, and later by Spohn \cite{Spohn} in the more general case where $\hat V$ is a Radon measure on $\bR^d$ such that 
$$
\int_{\bR^d}|k|^2|\hat V(k)|dk<\infty\,.
$$
Notice however that Spohn's uniqueness theorem uses Dobrushin's Proposition \ref{P-Dobru} (or equivalently the mean field limit obtained in both \cite{BraunHepp,Neunzert}). Therefore the 
approach based on the BBGKY hierarchy is not really an alternative to the one based on the empirical measure.

These two approaches of the mean field limit involve objects of a different nature. Indeed, the time dependent empirical measures considered in the first approach are measures defined on the 
single-particle phase space, whereas the second approach based on the BBGKY hierarchy involves the sequence of $m$-particle phase spaces for all $m\ge 1$.


\section{Empirical Measures and Chaotic Sequences}


The two approaches of the mean field limit sketched in sections \ref{S-Empir} and \ref{S-Hier} involve probability measures defined on very different phase spaces, and therefore may seem a 
priori unrelated. Indeed, in section \ref{S-Empir}, the Vlasov equation (\ref{VlasovK}) is the equation governing the weak limit of the sequence of empirical measures $\mu_{T^N_tZ_N^{in}}$ 
viewed as probability measures on the $1$-particle phase space $\bR^d$. In section \ref{S-Hier}, the object of interest is $P_N$, which is a symmetric probability measures on the $N$-particle 
phase space $(\bR^d)^N$, and the goal of the mean field limit is to describe the asymptotic behavior of $P_N$ as $N\to\infty$ through the sequence of its marginals $P_{N:m}$. The $m$-th 
marginal $P_{N:m}$ of $P_N$ is itself a symmetric probability measure on the $m$-particle phase space $(\bR^d)^m$.

Perhaps the key to understanding how these different approaches to the mean field limit are related is the following observation\footnote{The idea of identifying the $N$-tuple $(z_1,\ldots,z_N)$
modulo permutations of the $z_k$s to the empirical measure $\mu_{(z_1,\ldots,z_N)}$  explicitly appears in \cite{Grunbaum} (see formula (1) on p. 330).}: the empirical measure 
$$
\mu_{(z_1,\ldots,z_N)}(dz):=\frac1N\sum_{k=1}^N\de(z-z_k)
$$
is a symmetric function of the $N$ variables $(z_1,\ldots,z_N)\in(\bR^d)^N$ with values in the set of probability measures in the variable $z\in\bR^d$. With this observation in mind, it becomes
natural to consider expressions of the form
$$
\int_{(\bR^d)^N}\mu^{\otimes m}_{z_1,\ldots,z_N}P_N(dz_1\ldots dz_N)
$$
where the measure-valued symmetric function $\mu^{\otimes m}_{z_1,\ldots,z_N}$ of the $N$-tuple $(z_1,\ldots,z_N)$ is averaged under the symmetric probability measure $P_N$ defined
on the $N$-particle phase space $(\bR^d)^N$. This expression defines a probability measure on the $m$-particle phase space $(\bR^d)^m$, which is related to the $m$-th marginal of $P_N$
by a combinatorial argument that is the key to statement (a) in the theorem below. 

More precisely, statement (a) uses this combinatorial argument together with the $N$-particle dynamics and relates the evolution of the $m$-particle marginal $P_{N:m}$ in the BBGKY hierarchy 
to tensor powers of the empirical measure, which is a measure valued solution of the mean field equation (\ref{VlasovK}). In other words, statement (a) in the theorem below really bridges the
two approaches to the mean field limit.

Combining statement (a) with Dobrushin's inequality and a quantitative variant of the law of large numbers for the initial $1$-particle distribution $f^{in}\scrL^d$, we arrive at a chaoticity estimate,
measuring the distance from $P_{N:m}(t)$ to $(f(t)\scrL^d)^{\otimes m}$ in terms of $N\ge m\ge 1$ for all $t\ge 0$: see statement (c) in the theorem below.

\begin{theorem}\label{T-Chaotic}
Let $P_N^{in}\in\cP_1((\bR^d)^N)$ satisfy the symmetry condition (\ref{SymPin}), and let $t\mapsto P_N(t)$ be defined by (\ref{DefPNt}). Then

\smallskip
\noindent
(a) \textbf{From tensorized empirical measures to marginals} For all $t\in\bR$ and all $N\in\bN^*$
\begin{equation}\label{Empir-BBGKY}
\int_{(\bR^d)^N}\mu_{T^N_tZ_N^{in}}^{\otimes m}P_N^{in}(dZ_N^{in})=\frac{N!}{(N-m)!N^m}P_{N:m}(t)+R_{N,m}(t)\,,
\end{equation}
where $R_{N,m}(t)$ is a positive Radon measure on $(\bR^d)^m$ with total mass
\begin{equation}\label{IneqRNm}
\la R_{N,m}(t),1\ra=1-\frac{N!}{(N-m)!N^m}\le\frac{m(m-1)}{2N}\,.
\end{equation}
(b) \textbf{Dobrushin's estimate for marginals} For all $t\in\bR$, all $N\in\bN^*$ and all $m=1,\ldots,N$, and for all bounded and Lipschitz continuous $\phi_m$ defined on $(\bR^d)^m$, one 
has
\begin{equation*}
\begin{aligned}
{}&|\la P_{N:m}(t)-(f(t)\scrL^d)^{\otimes m},\phi_m\ra|\le\|\phi_m\|_{L^\infty}\frac{m(m-1)}{N}
\\
&\qquad\qquad+me^{2L|t|}\Lip(\phi_m)\int_{(\bR^d)^N}\Dist_{MK,1}(\mu_{Z_N^{in}},f^{in}\scrL^d)P_N^{in}(dZ_N^{in})\,.
\end{aligned}
\end{equation*}
(c) \textbf{Chaoticity estimate} Assume that $P_N^{in}=(f^{in}\scrL^d)^{\otimes N}$ and that
\begin{equation}\label{d+5Momf}
a:=\int_{\bR^d}|z|^{d+5}f^{in}(z)dz<\infty\,;
\end{equation}
then
\begin{equation*}
\begin{aligned}
\|P_{N:m}(t)-(f(t)\scrL^d)^{\otimes m}\|_{W^{-1,1}((\bR^d)^m)}\le m\left(\frac{m-1}{N}+e^{2L|t|}\frac{C(a,d)}{N^{1/(d+4)}}\right)
\end{aligned}
\end{equation*}
for all $t\in\bR$ and all $N\ge m\ge 1$. In particular, for $m=1$, one has
$$
\Dist_{MK,1}(P_{N:1}(t),f(t)\scrL^d)\le C(a,d)e^{2L|t|}/N^{1/(d+4)}\,.
$$
\end{theorem}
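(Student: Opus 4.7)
\smallskip
\textbf{Plan of proof.}

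\emph{Part (a).} The key is the change of variable formula combined with the symmetry of $P_N(t)$. Since $P_N(t)=T^N_t\#P_N^{in}$, one has
\[
\int_{(\bR^d)^N}\mu_{T^N_tZ_N^{in}}^{\otimes m}P_N^{in}(dZ_N^{in})=\int_{(\bR^d)^N}\mu_{Z_N}^{\otimes m}P_N(t,dZ_N)\,.
\]
Expanding $\mu_{Z_N}^{\otimes m}=N^{-m}\sum_{k_1,\ldots,k_m=1}^N\delta_{(z_{k_1},\ldots,z_{k_m})}$, I split the sum into the $N!/(N-m)!$ tuples with pairwise distinct indices and those with at least one repetition. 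By the symmetry (\ref{SymPNt}) of $P_N(t)$, each distinct $m$-tuple contributes $p_m\#P_N(t)=P_{N:m}(t)$, which accounts for the main term. The remainder $R_{N,m}(t)$ is by construction a positive measure with total mass $1-N!/((N-m)!N^m)=1-\prod_{j=0}^{m-1}(1-j/N)$, bounded above by $\sum_{j=0}^{m-1}j/N=m(m-1)/(2N)$ by the elementary inequality $1-\prod(1-x_j)\le\sum x_j$ for $x_j\in[0,1]$.

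\emph{Part (b).} I decompose
\[
P_{N:m}(t)-(f(t)\scrL^d)^{\otimes m}=\Bigl[P_{N:m}(t)-\!\!\int\!\mu_{T^N_tZ_N^{in}}^{\otimes m}P_N^{in}(dZ_N^{in})\Bigr]+\Bigl[\int\!\mu_{T^N_tZ_N^{in}}^{\otimes m}P_N^{in}(dZ_N^{in})-(f(t)\scrL^d)^{\otimes m}\Bigr].
\]
For the first bracket, (a) together with $\langle R_{N,m}(t),1\rangle=1-N!/((N-m)!N^m)\le m(m-1)/(2N)$ and the triangle inequality yields an $L^\infty$-bound of $\|\phi_m\|_{L^\infty}m(m-1)/N$. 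For the second bracket, I use $\int(f(t)\scrL^d)^{\otimes m}P_N^{in}(dZ_N^{in})=(f(t)\scrL^d)^{\otimes m}$ and bring the difference inside the integral; then a one-factor-at-a-time telescoping shows that for every Lipschitz $\phi_m$,
\[
|\langle\mu^{\otimes m}-\nu^{\otimes m},\phi_m\rangle|\le m\,\Lip(\phi_m)\,\Dist_{MK,1}(\mu,\nu)\,,
\]
applied with $\mu=\mu_{T^N_tZ_N^{in}}$ and $\nu=f(t)\scrL^d$. Since by Proposition \ref{P-Dobru}(a) the measure $\mu_{T^N_tZ_N^{in}}$ is precisely the solution of (\ref{VlasovMut}) issued from $\mu_{Z_N^{in}}$, Dobrushin's inequality (\ref{Dobr<}) contracts the pair back to time $0$ with factor $e^{2L|t|}$, yielding the second term announced in (b).

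\emph{Part (c).} Specializing (b) to $P_N^{in}=(f^{in}\scrL^d)^{\otimes N}$ and taking the supremum over test functions $\phi_m$ with $\|\phi_m\|_{L^\infty}\le 1$ and $\Lip(\phi_m)\le 1$ (i.e. with $\|\phi_m\|_{W^{1,\infty}}\le 1$) gives the $W^{-1,1}$ norm on the left-hand side; the only remaining ingredient is a quantitative bound on
\[
\int_{(\bR^d)^N}\Dist_{MK,1}(\mu_{Z_N^{in}},f^{in}\scrL^d)\,(f^{in}\scrL^d)^{\otimes N}(dZ_N^{in})\,,
\]
i.e. the expected $W_1$-distance between the empirical measure of $N$ i.i.d.\ samples and their common law. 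Under the moment hypothesis (\ref{d+5Momf}), a quantitative law of large numbers (e.g.\ a Fournier--Guillin--type concentration estimate, obtained by truncating on a ball of radius $\sim N^\alpha$, discretizing at scale $N^{-\beta}$, and balancing the resulting errors against the moment tail) yields the rate $C(a,d)/N^{1/(d+4)}$, and the specific exponent $1/(d+4)$ with $d+5$ moments comes precisely from this balance. Combining everything produces the displayed bound, and for $m=1$ the combinatorial term $m(m-1)/N$ vanishes and the $L^\infty$ constraint on $\phi_1$ is no longer needed, so the estimate collapses to the stated Monge--Kantorovich bound.

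The main obstacle is the third step: the quantitative law of large numbers in Monge--Kantorovich distance is the only non-soft ingredient, and producing the exponent $1/(d+4)$ requires a careful two-scale (truncation plus dyadic discretization) argument together with the moment assumption (\ref{d+5Momf}); the combinatorial identity (a) and the telescoping Dobrushin argument in (b) are essentially book-keeping by comparison.
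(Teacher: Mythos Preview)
Your proposal is correct and follows essentially the same route as the paper's proof: the combinatorial expansion and symmetry argument in (a), the telescoping bound $|\langle\mu^{\otimes m}-\nu^{\otimes m},\phi_m\rangle|\le m\,\Lip(\phi_m)\,\Dist_{MK,1}(\mu,\nu)$ (which the paper states as the corollary $\Dist_{MK,1}(\mu^{\otimes m},\nu^{\otimes m})\le m\,\Dist_{MK,1}(\mu,\nu)$) combined with Dobrushin's inequality in (b), and a quantitative law of large numbers for empirical measures in (c). The only noteworthy difference is in (c): rather than sketching a truncation--discretization argument, the paper passes from $\Dist_{MK,1}$ to $\Dist_{MK,2}$, applies Cauchy--Schwarz, and then invokes Theorem~1.1 of Horowitz--Karandikar to obtain the rate $N^{-1/(d+4)}$ directly.
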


\begin{proof} [Proof of statement (a)]
Let $\phi_m\in C_b((\bR^d)^m)$; define
$$
\Phi_m(z_1,\ldots,z_N)=\frac1{N^m}\sum_{j\in F(m,N)}\phi_m(z_{j(1)},\ldots,z_{j(m)})
$$
where $F(m,N)$ is the set of maps from $\{1,\ldots,m\}$ to $\{1,\ldots,N\}$. Thus
$$
\la\mu_{T^N_tZ_N^{in}}^{\otimes m},\phi_m\ra=\Phi_m(T^N_tZ_N^{in})
$$
so that
\begin{equation*}
\begin{aligned}
\La\int_{(\bR^d)^N}\mu_{T^N_tZ_N^{in}}^{\otimes m}P_N^{in}(dZ_N^{in}),\phi_m\Ra
&=
\int_{(\bR^d)^N}\Phi_m(T^N_tZ_N^{in})P_N^{in}(dZ_N^{in})
\\
&=
\int_{(\bR^d)^N}\Phi_m(Z_N)P_N(t,dZ_N)\,.
\end{aligned}
\end{equation*}

Next split the summation defining $\Phi_m$ as
\begin{equation*}
\begin{aligned}
\Phi_m(z_1,\ldots,z_N)&=\frac1{N^m}\sum_{j\in J(m,N)}\phi_m(z_{j(1)},\ldots,z_{j(m)})
\\
&+\frac1{N^m}\sum_{j\in G(m,N)}\phi_m(z_{j(1)},\ldots,z_{j(m)})
=:\Phi_m^J(Z_N)+\Phi_m^G(Z_N)\,,
\end{aligned}
\end{equation*}
where $J(m,N)$ is the set of one-to-one maps from $\{1,\ldots,m\}$ to $\{1,\ldots,N\}$ and $G(m,N):=F(m,N)\setminus J(m,N)$. By the symmetry property (\ref{SymPNt}) of $P_n(t)$, for each 
$j\in J(m,N)$, one has
\begin{equation*}
\begin{aligned}
\int_{(\bR^d)^N}\phi_m(z_{j(1)},\ldots,z_{j(m)})P_N(t,dZ_N)&=\int_{(\bR^d)^N}\phi_m(z_1,\ldots,z_m)P_N(t,dZ_N)
\\
&=\int_{(\bR^d)^m}\phi_m(Z_m)P_{N:m}(t,Z_m)\,,
\end{aligned}
\end{equation*}
so that
$$
\int_{(\bR^d)^N}\Phi_m^J(Z_N)P_N(t,dZ_N)=\frac{\#J(m,N)}{N^m}\int_{(\bR^d)^m}\phi_m(Z_m)P_{N:m}(t,dZ_m)\,.
$$
On the other hand, the formula
\begin{equation}\label{DefRNm}
\la R_{N,m}(t),\phi_m\ra:=\int_{(\bR^d)^N}\Phi_m^G(Z_N)P_N(t,dZ_N)
\end{equation}
defines a positive Radon measure satisfying
$$
\la R_{N,m}(t),1\ra=\frac{\#G(m,N)}{N^m}\,.
$$
The result follows from the equality
$$
\#J(m,N)=N(N-1)\ldots(N-m+1)\,,
$$
and the inequality
\begin{equation}\label{IneqB}
\begin{aligned}
\frac{\#G(m,N)}{N^m}&=1-\left(1-\frac1N\right)\ldots\left(1-\frac{m-1}N\right)
\\
&\le\frac{1+\ldots+m-1}{N}=\frac{m(m-1)}{2N}
\end{aligned}
\end{equation}
following from Theorem 58 in \cite{HardyLittPol}.
\end{proof}

\begin{lemma}
Let $\l\in\cP_1(\bR^m)$ and $\mu,\nu\in\cP_1(\bR^n)$. Then
\begin{equation*}
\left\{
\begin{aligned}
\Dist_{MK,1}(\l\otimes\mu,\l\otimes\nu)&\le\Dist_{MK,1}(\mu,\nu)\,,
\\
\Dist_{MK,1}(\mu\otimes\l,\nu\otimes\l)&\le\Dist_{MK,1}(\mu,\nu)\,.
\end{aligned}
\right.
\end{equation*}
\end{lemma}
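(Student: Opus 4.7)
The plan is to reduce the two-variable comparison to the one-variable one via the Kantorovich--Rubinstein duality (\ref{DualKR}), exploiting the fact that the Euclidean norm on $\bR^{m+n}$ satisfies $|(0,y_1-y_2)|=|y_1-y_2|$. The basic observation is that partial Lipschitz estimates in one group of variables are inherited from joint Lipschitz estimates.

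Concretely, let $\Phi\in C_b(\bR^{m+n})$ with $\Lip(\Phi)\le 1$. For each fixed $x\in\bR^m$, the section $y\mapsto\Phi(x,y)$ is $1$-Lipschitz on $\bR^n$, since $|\Phi(x,y_1)-\Phi(x,y_2)|\le|(0,y_1-y_2)|=|y_1-y_2|$. By Fubini,
\[
\int_{\bR^{m+n}}\Phi\,d(\l\otimes\mu)-\int_{\bR^{m+n}}\Phi\,d(\l\otimes\nu)=\int_{\bR^m}\left(\int_{\bR^n}\Phi(x,y)\mu(dy)-\int_{\bR^n}\Phi(x,y)\nu(dy)\right)\l(dx),
\]
and applying (\ref{DualKR}) to the inner bracket bounds its absolute value by $\Dist_{MK,1}(\mu,\nu)$ uniformly in $x$. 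Since $\l$ is a probability measure, integrating gives
\[
\left|\int\Phi\,d(\l\otimes\mu)-\int\Phi\,d(\l\otimes\nu)\right|\le\Dist_{MK,1}(\mu,\nu),
\]
and the first inequality follows by taking the supremum over admissible $\Phi$ via (\ref{DualKR}). The second inequality is identical by symmetry of the Euclidean norm in the two blocks of variables (or by relabeling).

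As an alternative (and perhaps more transparent) approach, one can construct a coupling directly: given an optimal or near-optimal $\pi\in\Pi(\mu,\nu)$, define $\Pi\in\cP_1((\bR^{m+n})^2)$ by
\[
\Pi(dx_1\,dy_1\,dx_2\,dy_2):=\l(dx_1)\,\de_{x_1}(dx_2)\,\pi(dy_1\,dy_2),
\]
i.e.\ couple $\l$ to itself along the diagonal and tensor with $\pi$. Its marginals are $\l\otimes\mu$ and $\l\otimes\nu$, and the cost becomes
\[
\iint|(x_1,y_1)-(x_2,y_2)|\Pi=\iint|y_1-y_2|\,\pi(dy_1\,dy_2),
\]
which yields the same estimate after taking the infimum over $\pi$. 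There is no real obstacle here: the result is a contraction-type property of tensorization for $\Dist_{MK,1}$, and the only point to verify is the trivial identity $|(0,y_1-y_2)|=|y_1-y_2|$ in the Euclidean metric.
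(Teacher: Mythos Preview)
Your proposal is correct. Your ``alternative'' coupling construction---pairing $\lambda$ with itself along the diagonal and tensoring with a coupling $\pi\in\Pi(\mu,\nu)$---is exactly the paper's proof (the paper denotes this coupling $\pi_\lambda$ and computes its cost in the same way).

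Your primary argument via the Kantorovich--Rubinstein duality (\ref{DualKR}) is a genuinely different route: you freeze the $\lambda$-variable, observe that sections $\Phi(x,\cdot)$ of a $1$-Lipschitz function on $\bR^{m+n}$ remain $1$-Lipschitz on $\bR^n$, and apply duality fibrewise before integrating in $\lambda$. This is arguably more direct here and sidesteps any explicit coupling, but it is tied to the exponent $r=1$, where the dual formulation (\ref{DualKR}) is available. The paper's coupling argument, by contrast, transports immediately to $\Dist_{MK,r}$ for general $r\ge 1$, and fits more naturally with the optimal-transport viewpoint used elsewhere in the paper (e.g.\ in the proof of Theorem~\ref{T-StabBBGKY}).
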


\begin{proof}
To $\pi\in\Pi(\mu,\nu)$ we associate $\pi_\l\in\cP((\bR^{m+n})^2)$ defined by
\begin{equation*}
\begin{aligned}
\iint_{\bR^{m+n}}\iint_{\bR^{m+n}}\chi(x_1,x_2,y_1,y_2)\pi_\l(dx_1dx_2dy_1dy_2)
\\
=\int_{\bR^m}\iint_{(\bR^n)^2}\chi(z,x_2,z,y_2)\l(dz)\pi(dx_2dy_2)
\end{aligned}
\end{equation*}
for all $\chi\in C((\bR^{m+n})^2)$ such that 
$$
|\chi(x_1,x_2,y_1,y_2)|=O(|x_1|+|x_2|+|y_1|+|y_2|)\hbox{ as }|x_1|+|x_2|+|y_1|+|y_2|\to\infty\,.
$$
Obviously $\pi_\l\in\Pi(\l\otimes\mu,\l\otimes\nu)$ so that
\begin{equation*}
\begin{aligned}
\Dist_{MK,1}(\l\otimes\mu,\l\otimes\nu)\!\le\!\iint_{\bR^{m+n}}\Dist((x_1,x_2),(y_1,y_2))\pi_\l(dx_1dx_2dy_1dy_2)&
\\
=\iint_{\bR^m}\iint_{(\bR^n)^2}|x_2-y_2|\l(dz)\pi(dx_2dy_2)=\iint_{(\bR^n)^2}|x_2-y_2|\pi(dx_2dy_2)&\,.
\end{aligned}
\end{equation*}
The first inequality in the lemma follows from minimizing the right hand side above for $\pi$ running through $\Pi(\mu,\nu)$. The second inequality in the lemma is established by a similar 
argument.
\end{proof}

\begin{corollary}\label{C-DistProdTens}
For all $\mu,\nu\in\cP_1(\bR^d)$ and all $m\ge 1$, one has
$$
\Dist_{MK,1}(\mu^{\otimes m},\nu^{\otimes m})\le m\Dist_{MK,1}(\mu,\nu)\,.
$$
\end{corollary}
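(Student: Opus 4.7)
The plan is to iterate the preceding lemma via a telescoping argument. The intuition is that to compare the $m$-fold tensor powers it suffices to swap one factor at a time, using the lemma to absorb the surrounding tensor factors into a ``parameter'' $\l$, and then to sum up the $m$ resulting one-factor contributions.

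More precisely, I introduce the family of intermediate probability measures
\[
\pi_k:=\nu^{\otimes k}\otimes\mu^{\otimes(m-k)}\in\cP_1((\bR^d)^m)\,,\qquad k=0,1,\ldots,m\,,
\]
so that $\pi_0=\mu^{\otimes m}$ and $\pi_m=\nu^{\otimes m}$. By the triangle inequality for $\Dist_{MK,1}$,
\[
\Dist_{MK,1}(\mu^{\otimes m},\nu^{\otimes m})\le\sum_{k=0}^{m-1}\Dist_{MK,1}(\pi_k,\pi_{k+1})\,.
\]
Each consecutive pair $\pi_k,\pi_{k+1}$ differs in exactly one tensor slot: both are of the form $\l\otimes\alpha\otimes\l'$ with $\l=\nu^{\otimes k}$ and $\l'=\mu^{\otimes(m-k-1)}$ held common, and with $\alpha=\mu$ in $\pi_k$ replaced by $\alpha=\nu$ in $\pi_{k+1}$.

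The key step is then to apply the previous lemma twice to handle this single-slot replacement: first use the inequality $\Dist_{MK,1}(\l\otimes\xi,\l\otimes\eta)\le\Dist_{MK,1}(\xi,\eta)$ with $\xi=\mu\otimes\l'$ and $\eta=\nu\otimes\l'$ to strip off the left factor $\l=\nu^{\otimes k}$, and then apply $\Dist_{MK,1}(\xi\otimes\l',\eta\otimes\l')\le\Dist_{MK,1}(\xi,\eta)$ (now with $\xi=\mu$, $\eta=\nu$) to strip off the right factor $\l'=\mu^{\otimes(m-k-1)}$. This yields
\[
\Dist_{MK,1}(\pi_k,\pi_{k+1})\le\Dist_{MK,1}(\mu,\nu)
\]
for every $k=0,\ldots,m-1$. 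Substituting into the telescoping bound gives the claimed inequality
\[
\Dist_{MK,1}(\mu^{\otimes m},\nu^{\otimes m})\le m\,\Dist_{MK,1}(\mu,\nu)\,.
\]

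There is no genuine obstacle here: the only mild subtlety is to verify that the lemma, stated for $\l\in\cP_1(\bR^m)$ and $\mu,\nu\in\cP_1(\bR^n)$ with arbitrary dimensions, can be applied to tensor products of measures on $\bR^d$ of various lengths---this is immediate since $(\bR^d)^k\cong\bR^{dk}$ and all the measures in sight are in $\cP_1$ of the appropriate Euclidean space (the first moments of tensor products of $\cP_1$ measures being finite by Fubini). No induction on $m$ is needed, as the telescope does the bookkeeping directly.
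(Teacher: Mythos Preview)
Your proof is correct and is essentially the same as the paper's: both telescope $\mu^{\otimes m}$ to $\nu^{\otimes m}$ by swapping one tensor factor at a time and apply the preceding lemma twice per step. The only cosmetic difference is the direction of the telescope (you swap from the left, the paper from the right).
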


\begin{proof}
By the triangle inequality
\begin{equation}\label{dmumnum}
\begin{aligned}
\Dist_{MK,1}(\mu^{\otimes m},\nu^{\otimes m})&\le\Dist_{MK,1}(\mu^{\otimes m},\mu^{\otimes{(m-1)}}\otimes\nu)
\\
&+\sum_{j=1}^{m-2}\Dist_{MK,1}(\mu^{\otimes(m-1-j)}\otimes\mu\otimes\nu^{\otimes j},\mu^{\otimes{(m-1-j)}}\otimes\nu\otimes\nu^{\otimes j})
\\
&+\Dist_{MK,1}(\mu\otimes\nu^{\otimes(m-1)},\nu\otimes\nu^{\otimes(m-1)})\,.
\end{aligned}
\end{equation}
The general term in the summation above satisfies
\begin{equation*}
\begin{aligned}
\Dist_{MK,1}(\mu^{\otimes(m-1-j)}\otimes\mu\otimes\nu^{\otimes j},\mu^{\otimes{(m-1-j)}}\otimes\nu\otimes\nu^{\otimes j})
\\
\le
\Dist_{MK,1}(\mu\otimes\nu^{\otimes j},\nu\otimes\nu^{\otimes j})\le\Dist_{MK,1}(\mu,\nu)\,,
\end{aligned}
\end{equation*}
where the first inequality follows from the first inequality in the lemma and the second from the second inequality in the lemma. The first and last terms on the right hand side of  (\ref{dmumnum}) 
are estimated by applying respectively the first and the second inequalities in the lemma.
\end{proof}

\begin{proof}[Proof of statements (b) and (c)]
Let $\phi_m\in C_b((\bR^d)^m)$ be Lipschitz continuous; then
\begin{equation*}
\begin{aligned}
\left|\La\frac{N(N-1)\ldots(N-m+1)}{N^m}P_{N:m}(t)-(f(t)\scrL^d)^{\otimes m},\phi_m\Ra\right|&
\\
\le
\left|\int_{(\bR^d)^N}\la\mu_{T^N_tZ_N^{in}}^{\otimes m};\phi_m\ra P_N^{in}(dZ_N^{in})-\la(f(t)\scrL^d)^{\otimes m},\phi_m\ra\right|+\la R_{N,m}(t),|\phi_m|\ra&
\\
=
\left|\int_{(\bR^d)^N}\la\mu_{T^N_tZ_N^{in}}^{\otimes m}-(f(t)\scrL^d)^{\otimes m};\phi_m\ra P_N^{in}(dZ_N^{in})\right|+\la R_{N,m}(t),|\phi_m|\ra&
\\
\le
\int_{(\bR^d)^N}|\la\mu_{T^N_tZ_N^{in}}^{\otimes m}-(f(t)\scrL^d)^{\otimes m};\phi_m\ra|P_N^{in}(dZ_N^{in})+\|\phi_m\|_{L^\infty}\frac{m(m-1)}{2N}&
\\
\le
\Lip(\phi_m)\int_{(\bR^d)^N}\Dist_{MK,1}(\mu_{T^N_tZ_N^{in}}^{\otimes m},(f(t)\scrL^d)^{\otimes m})P_N^{in}(dZ_N^{in})+\|\phi_m\|_{L^\infty}\frac{m(m-1)}{2N}&
\\
\le
m\Lip(\phi_m)\int_{(\bR^d)^N}\Dist_{MK,1}(\mu_{T^N_tZ_N^{in}},(f(t)\scrL^d))P_N^{in}(dZ_N^{in})+\|\phi_m\|_{L^\infty}\frac{m(m-1)}{2N}&
\\
\le\!me^{2L|t|}\Lip(\phi_m)\!\int_{(\bR^d)^N}\!\Dist_{MK,1}(\mu_{Z_N^{in}},(f^{in}\scrL^d))P_N^{in}(dZ_N^{in})\!+\!\|\phi_m\|_{L^\infty}\frac{m(m-1)}{2N}&\,.
\end{aligned}
\end{equation*}
The first inequality above follows from statement (a) in the Theorem and the second inequality from the estimate on $R_{N,m}(t)$. The third inequality above follows from (\ref{DualKR}) and 
the fourth from the corollary above, while the fifth follows from (\ref{Dobr<}). Since $P_{N:m}(t)$ is a probability measure on $(\bR^d)^m$, the bound (\ref{IneqB}) and the chain of inequalities 
above imply statement (b) in the Theorem.

Now for statement (c). First
\begin{equation*}
\begin{aligned}
\int_{(\bR^d)^N}\Dist_{MK,1}&(\mu_{Z_N^{in}},f^{in}\scrL^d)P_N^{in}(dZ_N^{in})
\\
&\le
\int_{(\bR^d)^N}\Dist_{MK,2}(\mu_{Z_N^{in}},f^{in}\scrL^d)P_N^{in}(dZ_N^{in})
\\
&\le
\left(\int_{(\bR^d)^N}\Dist_{MK,2}(\mu_{Z_N^{in}},f^{in}\scrL^d)^2P_N^{in}(dZ_N^{in})\right)^{1/2}
\end{aligned}
\end{equation*}
where the first inequality results from the ordering of Monge-Kantorovich distances (see formula (7.3), section \S 7.1.2 in \cite{VillaniTOT}), while the second follows from the Cauchy-Schwarz 
inequality. By Theorem 1.1 of \cite{HoroKaran},
$$
\int_{(\bR^d)^N}\Dist_{MK,2}(\mu_{Z_N^{in}},f^{in}\scrL^d)^2P_N^{in}(dZ_N^{in})\le\frac{C(a,d)^2}{N^{2/(d+4)}}\,,
$$
since $P_N^{in}=(f^{in}\scrL^d)^{\otimes N}$ with $f^{in}$ satisfying (\ref{d+5Momf}).

Together with (b), this implies the first inequality in (c). The second estimate in (c) is a consequence of the first in the case $m=1$ and of (\ref{DualKR}).
\end{proof}

\smallskip
The convergence rate in Theorem \ref{T-Chaotic} (c) obviously depends on the quantitative chaoticity estimate in \cite{HoroKaran}. More information on analogous chaoticity estimates can 
be found in Lemma 4.2 of \cite{MischMouhot}.


\section{Weak stability of the BBGKY hierarchy}


In this section, we establish a stability property of the BBGKY hierarchy in the weak topology of probability measures. This stability property is uniform as the particle number tends to infinity.

Our stability estimate uses the following variant of Monge-Kantorovich distance. Let $P\in\cP_1((\bR^d)^M)$ and $Q\in\cP_1((\bR^d)^N)$ satisfy the symmetry condition (\ref{SymPin}). 

Equivalently, $P\in\cP_1((\bR^d)^M/\fS_M)$ and $Q\in\cP_1((\bR^d)^N/\fS_N)$, where $(\bR^d)^M/\fS_M$ (resp. $(\bR^d)^N/\fS_N$) is the quotient of $(\bR^d)^M$ under the action of 
$\fS_M$ (resp. of $(\bR^d)^N$ under the action of $\fS_N$) defined by $S_\si$ as in (\ref{DefSsi}). Consider
$$
\DDist_{MK,1}(P,Q)=\inf_{\rho\in\Pi(P,Q)}\iint_{(\bR^d)^M\times(\bR^d)^N}\Dist_{MK,1}(\mu_{X_M},\mu_{Y_N})\rho(dX_M,dY_N)\,.
$$

\begin{theorem}\label{T-StabBBGKY}
Let $M,N\ge 1$, and let $P_M\in\cP_1((\bR^d)^M)$ and $Q_N\in\cP_1((\bR^d)^N)$ satisfy the symmetry condition (\ref{SymPin}). For each $t\in\bR$, set $P_M(t):=T^M_t\#P_M^{in}$ and 
$Q_N(t):=T^N_t\#Q_N^{in}$. Then

\smallskip
\noindent
(a) For each $t\in\bR$, one has
$$
\DDist_{MK,1}(P_M(t),Q_N(t))\le e^{2L|t|}\DDist_{MK,1}(P_M^{in},Q_N^{in})\,.
$$

\noindent
(b) For each $t\in\bR$, each $m,M,N\in\bN^*$ such that $M,N\ge m$, and for each bounded and Lipschitz continuous function $\phi_m$ defined on $(\bR^d)^m$, one has
\begin{equation*}
\begin{aligned}
{}&|\la P_{M:m}(t)-Q_{N:m}(t),\phi_m\ra|
\\
&\le m\left(e^{2L|t|}\Lip(\phi_m)\DDist_{MK,1}(P_M^{in},Q_N^{in})+(m-1)\|\phi_m\|_{L^\infty}\left(\frac1M+\frac1N\right)\right)\,.
\end{aligned}
\end{equation*}
\end{theorem}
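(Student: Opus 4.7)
My plan is to prove (a) by a coupling-plus-Dobrushin argument on empirical measures, and then reduce (b) to (a) through the combinatorial identity \eqref{Empir-BBGKY} and Corollary \ref{C-DistProdTens}. For (a), first I would take any $\rho^{in}\in\Pi(P_M^{in},Q_N^{in})$ and push it forward under the product map $(X,Y)\mapsto(T^M_tX,T^N_tY)$ to obtain a coupling $\rho(t)\in\Pi(P_M(t),Q_N(t))$. By Proposition \ref{P-Dobru}(a) the empirical measures $\mu_{T^M_tX_M^{in}}$ and $\mu_{T^N_tY_N^{in}}$ are weak solutions of the mean field equation \eqref{VlasovMut} with initial data $\mu_{X_M^{in}}$ and $\mu_{Y_N^{in}}$, so Dobrushin's estimate \eqref{Dobr<} gives pointwise
\[
\Dist_{MK,1}\bigl(\mu_{T^M_tX_M^{in}},\mu_{T^N_tY_N^{in}}\bigr)\le e^{2L|t|}\,\Dist_{MK,1}\bigl(\mu_{X_M^{in}},\mu_{Y_N^{in}}\bigr).
\]
Integrating this against $\rho^{in}$ and taking the infimum over couplings yields (a).

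For (b), fix a bounded Lipschitz $\phi_m$ on $(\bR^d)^m$ and set $a_M:=M!/((M-m)!\,M^m)$. The identity \eqref{Empir-BBGKY} of Theorem \ref{T-Chaotic}(a) rearranges as
\[
P_{M:m}(t)-\int_{(\bR^d)^M}\mu_{T^M_tX_M^{in}}^{\otimes m}P_M^{in}(dX_M^{in})=(1-a_M)P_{M:m}(t)-R_{M,m}(t),
\]
and the analogous identity holds with $M,P$ replaced by $N,Q$. Since both $(1-a_M)P_{M:m}(t)$ and $R_{M,m}(t)$ are positive measures of total mass $1-a_M\le m(m-1)/(2M)$ by \eqref{IneqRNm}, pairing with $\phi_m$ contributes at most $\|\phi_m\|_{L^\infty}m(m-1)/M$ on the $P$-side and $\|\phi_m\|_{L^\infty}m(m-1)/N$ on the $Q$-side, producing exactly the $m(m-1)\|\phi_m\|_{L^\infty}(1/M+1/N)$ summand of the target bound.

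It then remains to control
\[
\left|\La\int\mu_{T^M_tX_M^{in}}^{\otimes m}P_M^{in}(dX_M^{in})-\int\mu_{T^N_tY_N^{in}}^{\otimes m}Q_N^{in}(dY_N^{in}),\phi_m\Ra\right|.
\]
For any $\rho^{in}\in\Pi(P_M^{in},Q_N^{in})$ this rewrites as a $\rho^{in}$-integral of $\la\mu_{T^M_tX_M^{in}}^{\otimes m}-\mu_{T^N_tY_N^{in}}^{\otimes m},\phi_m\ra$. I would estimate the integrand successively by $\Lip(\phi_m)\Dist_{MK,1}(\mu_{T^M_tX_M^{in}}^{\otimes m},\mu_{T^N_tY_N^{in}}^{\otimes m})$ using the dual Kantorovich-Rubinstein formula \eqref{DualKR}, by $m\Lip(\phi_m)\Dist_{MK,1}(\mu_{T^M_tX_M^{in}},\mu_{T^N_tY_N^{in}})$ using Corollary \ref{C-DistProdTens}, and finally by $m\,e^{2L|t|}\Lip(\phi_m)\Dist_{MK,1}(\mu_{X_M^{in}},\mu_{Y_N^{in}})$ using Dobrushin's estimate \eqref{Dobr<} exactly as in (a). Taking the infimum over $\rho^{in}$ replaces the average by $\DDist_{MK,1}(P_M^{in},Q_N^{in})$, which combined with the marginal-vs-tensorized errors gives (b).

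No step here is a genuine obstacle. The conceptual point --- that the combinatorial identity \eqref{Empir-BBGKY} bridges $m$-particle marginals to tensor powers of one-body empirical measures, so that all the dynamics is absorbed into the one-body Dobrushin inequality --- was already exploited in Theorem \ref{T-Chaotic}, and the present proof is in effect its coupled version. Measurability of $(X_M,Y_N)\mapsto\Dist_{MK,1}(\mu_{X_M},\mu_{Y_N})$ needed for the final integration is immediate since it is a continuous function of finitely many coordinates.
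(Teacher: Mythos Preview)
Your proposal is correct and follows essentially the same route as the paper's own proof: for (a), push forward an arbitrary coupling $\rho^{in}$ under $(X_M,Y_N)\mapsto(T^M_tX_M,T^N_tY_N)$, apply Dobrushin's inequality \eqref{Dobr<} pointwise to the empirical measures, integrate and minimize; for (b), use the combinatorial identity \eqref{Empir-BBGKY} to split off the error terms of total variation at most $m(m-1)/M$ and $m(m-1)/N$, then control the main term by \eqref{DualKR}, Corollary \ref{C-DistProdTens}, and Dobrushin again, exactly as the paper does. The only cosmetic difference is that you phrase the error estimate via the rearranged identity $P_{M:m}(t)-\int\mu^{\otimes m}=(1-a_M)P_{M:m}(t)-R_{M,m}(t)$, whereas the paper writes out the four error terms directly; the bounds coincide.
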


\begin{proof}
First we establish statement (a). Let $X_M\in(\bR^d)^M$ and $Y_N\in(\bR^d)^N$; then $t\mapsto\mu_{T^M_tX_M}$ and $t\mapsto\mu_{T^N_tY_N}$ are two weak solutions of the Vlasov 
equation (\ref{VlasovK}). By Dobrushin's estimate 
$$
\Dist_{MK,1}(\mu_{T^M_tX_M},\mu_{T^N_tY_N})\le e^{2L|t|}\Dist_{MK,1}(\mu_{X_M},\mu_{Y_N})\,.
$$
Let $\rho^{in}\in\Pi(P_M^{in},Q_N^{in})$; averaging both sides of this inequality with respect to $\rho^{in}$ gives
\begin{equation*}
\begin{aligned}
\iint_{(\bR^d)^M\times(\bR^d)^N}\Dist_{MK,1}(\mu_{X_M},\mu_{Y_N})\rho(t,dX_M,dY_N)&
\\
=
\iint_{(\bR^d)^M\times(\bR^d)^N}\Dist_{MK,1}(\mu_{T^M_tX_M},\mu_{T^N_tY_N})\rho^{in}(dX_M,dY_N)&
\\
\le
e^{2L|t|}\iint_{(\bR^d)^M\times(\bR^d)^N}\Dist_{MK,1}(\mu_{X_M},\mu_{Y_N})\rho^{in}(dX_M,dY_N)&\,,
\end{aligned}
\end{equation*}
where $\rho(t)$ is the push-forward of $\rho^{in}$ under the map $(X_M,Y_N)\mapsto(T^M_tX_M,T^N_tY_N)$. Therefore $\rho(t)\in\Pi(P_M(t),Q_N(t))$: indeed
\begin{equation*}
\begin{aligned}
\iint_{(\bR^d)^M\times(\bR^d)^N}(\phi(X_M)+\psi(Y_N))\rho(t,dX_M,dY_N)&
\\
=
\iint_{(\bR^d)^M\times(\bR^d)^N}(\phi(T^M_tX_M)+\psi(T^N_tY_N))\rho^{in}(dX_M,dY_N)&
\\
=
\int_{(\bR^d)^M}\phi(T^M_tX_M)P_M^{in}(dX_M)+\int_{(\bR^d)^N}\psi(T^N_tY_N))Q_N^{in}(dY_N)&
\\
=
\int_{(\bR^d)^M}\phi(X_M)P_M(t,dX_M)+\int_{(\bR^d)^N}\psi(Y_N))Q_N(t,dY_N)&\,.
\end{aligned}
\end{equation*}
Therefore
\begin{equation*}
\begin{aligned}
\DDist_{MK,1}(P_M(t),Q_N(t))
\le\iint_{(\bR^d)^M\times(\bR^d)^N}\Dist_{MK,1}(\mu_{X_M},\mu_{Y_N})\rho(t,dX_M,dY_N)&
\\
\le e^{2L|t|}\iint_{(\bR^d)^M\times(\bR^d)^N}\Dist_{MK,1}(\mu_{X_M},\mu_{Y_N})\rho^{in}(dX_M,dY_N)&\,.
\end{aligned}
\end{equation*}
Since this is true for all $\rho^{in}\in\Pi(P_M^{in},Q_N^{in})$, minimizing the right hand side of the inequality above in $\rho^{in}$ establishes the inequality in (a).

As for statement (b), pick $\phi_m$ to be a bounded and Lipschitz continuous function on $(\bR^d)^m$. By formula (\ref{Empir-BBGKY})
\begin{equation*}
\begin{aligned}
\la P_{N:m}(t)-Q_{N:m}(t),\phi_m\ra
&=
\iint_{(\bR^d)^M\times(\bR^d)^N}\la\mu_{T^M_tX_M}^{\otimes m}-\mu_{T^N_tY_N}^{\otimes m},\phi_m\ra\rho^{in}(dX_M,dY_N)
\\
&+\left(1-\frac{M!}{(M-m)!M^m}\right)\la P_{M:m}(t),\phi_m\ra-\la R_{M,m}(t),\phi_m\ra
\\
&-\left(1-\frac{N!}{(N-m)!N^m}\right)\la Q_{N:m}(t),\phi_m\ra+\la S_{N,m}(t),\phi_m\ra
\end{aligned}
\end{equation*}
where $R_{M,m}(t)$ is the Radon measure defined in (\ref{DefRNm}) and the $S_{N,m}(t)$ the analogue of $R_{N,m}(t)$ with $Q_N^{in}$ replacing $P_M^{in}$ in formula (\ref{DefRNm}). 
Thus, by (\ref{IneqRNm}) and (\ref{IneqB}), one has
\begin{equation*}
\begin{aligned}
\left(1-\frac{M!}{(M-m)!M^m}\right)&|\la P_{M:m}(t),\phi_m\ra|+|\la R_{M,m}(t),\phi_m\ra|
\\
&\le 2\left(1-\frac{M!}{(M-m)!M^m}\right)\|\phi_m\|_{L^\infty}\le\frac{m(m-1)}{M}\|\phi_m\|_{L^\infty}\,,
\end{aligned}
\end{equation*}
and, by the same token
\begin{equation*}
\begin{aligned}
\left(1-\frac{N!}{(N-m)!N^m}\right)|\la Q_{N:m}(t),\phi_m\ra|+|\la S_{N,m}(t),\phi_m\ra|&
\\
\le\frac{m(m-1)}{N}\|\phi_m\|_{L^\infty}&\,.
\end{aligned}
\end{equation*}

On the other hand, by Corollary \ref{C-DistProdTens} and Dobrushin's inequality (\ref{Dobr<}), 
\begin{equation*}
\begin{aligned}
|\la\mu_{T^M_tX_M}^{\otimes m}-\mu_{T^N_tY_N}^{\otimes m},\phi_m\ra|\le\Lip(\phi_m)\Dist_{MK,1}(\mu_{T^M_tX_M}^{\otimes m},\mu_{T^N_tY_N}^{\otimes m})&
\\
\le m\Lip(\phi_m)\Dist_{MK,1}(\mu_{T^M_tX_M},\mu_{T^N_tY_N})&
\\
\le m\Lip(\phi_m)e^{2L|t|}\Dist_{MK,1}(\mu_{X_M},\mu_{Y_N})&\,,
\end{aligned}
\end{equation*}
so that
\begin{equation*}
\begin{aligned}
|\la P_{M:m}(t)-Q_{N:m}(t),\phi_m\ra|\le\left(\frac{m(m-1)}{M}+\frac{m(m-1)}{N}\right)\|\phi_m\|_{L^\infty}&
\\
+m\Lip(\phi_m)e^{2L|t|}\iint_{(\bR^d)^M\times(\bR^d)^N}\Dist_{MK,1}(\mu_{X_M},\mu_{Y_N})\rho^{in}(dX_M,dY_N)&\,.
\end{aligned}
\end{equation*}
Minimizing the integral on the right hand side of this inequality as $\rho^{in}$ runs through $\Pi(P_M^{in},Q_N^{in})$ leads to the inequality stated in (b).
\end{proof}


\section{Continuous dependence on the initial data of statistical solutions of the Vlasov mean field PDE}\lb{S-ContDep}


In this section, we identify each element $P_N$ of $\cP_1((\bR^d)^N/\fS_N)$ with the element of $\cP(\cP_1(\bR^d))$ defined as the push-forward of $P_N$ under the map
$$
(\bR^d)^N\ni Z_N=(z_1,\ldots,z_N)\mapsto\mu_{Z_N}=\frac1N\sum_{k=1}^N\de_{z_k}\in\cP_1(\bR^d)\,.
$$
Since $\bR^d$ endowed with the Euclidean distance is a complete metric space, $\cP_1(\bR^d)$ endowed with the Monge-Kantorovich distance $\Dist_{MK,1}$ is a Polish space (see Proposition 
7.1.5 in \cite{AGS}). Define 
$$
\cP_1(\cP_1(\bR^d)):=\left\{P\in\cP(\cP_1(\bR^d))\,\hbox{ s.t. }\,\int_{\cP_1(\bR^d)}\Dist_{MK,1}(\mu,\de_0)P(d\mu)<\infty\right\}\,.
$$
Obviously, any element $P_N$ of $\cP_1((\bR^d)^N/\fS_N)$ is identified with an element of $\cP_1(\cP_1(\bR^d))$, since
\begin{equation*}
\begin{aligned}
\int_{\cP_1(\bR^d)}\Dist_{MK,1}(\mu,\de_0)P_N(d\mu)&=\int_{(\bR^d)^N}\Dist_{MK,1}(\mu_{Z_N},\de_0)P_N(dZ_N)
\\
&\le\int_{(\bR^d)^N}\frac1N\sum_{k=1}^N|z_k|P_N(dZ_N)<\infty\,.
\end{aligned}
\end{equation*}
Then the distance $\DDist_{MK,1}$ introduced in the previous section is extended to $\cP_1(\cP_1(\bR^d))$ as follows:
$$
\DDist_{MK,1}(P,Q)=\inf_{\rho\in\Pi(P,Q)}\iint_{\cP_1(\bR^d)\times\cP_1(\bR^d)}\Dist_{MK,1}(\mu,\nu)\rho(d\mu,d\nu)\,.
$$

Let $\cE$ be the set of probability measures $P^{in}\in\cP_1(\cP_1(\bR^d))$ such that
\begin{equation}\label{d+5MomPin}
\int_{\cP_1(\bR^d)}\la f,|z|^{d+5}\ra P^{in}(df)<\infty\,.
\end{equation}

\smallskip
The next result is a consequence of Theorem \ref{T-StabBBGKY}.

\begin{theorem}\label{T-StabInfin}
There exists a unique $1$-parameter group $T^\infty_t$ defined on $\cE$ and satisfying the following properties:

\smallskip
\noindent
(a) For each $P^{in}\in\cE$ and each sequence $P^{in}_N\in\cP_1((\bR^d)^N/\fS_N)$ such that 
$$
\DDist_{MK,1}(P^{in}_N,P^{in})\to 0\quad\hbox{ as }N\to\infty\,,
$$
one has 
$$
\DDist_{MK,1}(T^N_t\#P^{in}_N,T^\infty_tP^{in})\to 0\quad\hbox{ as }N\to\infty\,,
$$
uniformly in $t\in[-T,T]$ for all $T>0$.

\noindent
(b) For each $P^{in}\in\cE$, the map $\bR\ni t\mapsto T^\infty_tP^{in}\in\cP_1(\cP_1(\bR^d))$ is continuous for the distance $\DDist_{MK,1}$.

\noindent
(c) For each $P^{in}\in\cE$, the family $P_m$ defined for all $m\ge 1$ by the formula
$$
P_m(t):=\int_{\cP_1(\bR^d)}T^\infty_tP^{in}(df)f^{\otimes m}
$$
is a solution to the mean field, infinite hierarchy (\ref{VlasovInfin}) in the sense of distributions.

\noindent
(d) For each $P^{in},Q^{in}\in\cE$, one has
$$
\DDist_{MK,1}(T^\infty_tP^{in},T^\infty_tQ^{in})\le e^{2L|t|}\DDist_{MK,1}(P^{in},Q^{in})\,,\qquad t\in\bR\,.
$$
\end{theorem}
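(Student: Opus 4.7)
The overall strategy is to define $T^\infty_t$ by pushing forward under the Dobrushin flow on $\cP_1(\bR^d)$. Let $\Phi_t\colon\cP_1(\bR^d)\to\cP_1(\bR^d)$ denote the solution map of the Cauchy problem (\ref{VlasovMut}), and set $T^\infty_t P:=\Phi_t\#P$ for $P\in\cP_1(\cP_1(\bR^d))$. The moment bound (\ref{1stMom}) applied with $r=d+5$ shows that $T^\infty_t$ preserves $\cE$, while the group property is inherited directly from $\Phi_{t+s}=\Phi_t\circ\Phi_s$. The crucial identification is that, for any $P_N^{in}\in\cP_1((\bR^d)^N/\fS_N)$ regarded via $Z_N\mapsto\mu_{Z_N}$ as an element of $\cP_1(\cP_1(\bR^d))$, Proposition~\ref{P-Dobru}(a) gives $\mu_{T^N_tZ_N^{in}}=\Phi_t\mu_{Z_N^{in}}$, so $T^N_t\#P_N^{in}$ and $T^\infty_tP_N^{in}$ coincide as elements of $\cP_1(\cP_1(\bR^d))$.

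I would first establish (d). Given $\rho^{in}\in\Pi(P^{in},Q^{in})$, push it forward by $(\mu,\nu)\mapsto(\Phi_t\mu,\Phi_t\nu)$ to produce a coupling $\rho(t)\in\Pi(T^\infty_tP^{in},T^\infty_tQ^{in})$ (the verification is the same change-of-variable calculation as in the proof of Theorem~\ref{T-StabBBGKY}(a)); applying the pointwise Dobrushin contraction (\ref{Dobr<}) inside the integral and minimizing over $\rho^{in}$ yields the claimed bound. The same argument proves the analogous estimate on all of $\cP_1(\cP_1(\bR^d))$, so that (a) follows immediately from the crucial identification:
$$
\DDist_{MK,1}(T^N_t\#P_N^{in},T^\infty_tP^{in})=\DDist_{MK,1}(T^\infty_tP_N^{in},T^\infty_tP^{in})\le e^{2L|t|}\DDist_{MK,1}(P_N^{in},P^{in}),
$$
with uniformity on any interval $[-T,T]$ built in. For uniqueness, any other group satisfying (a) must agree with $T^\infty_t$ on the $\DDist_{MK,1}$-limit of an approximating sequence $P_N^{in}$; a suitable approximation is furnished by $P_N^{in}:=\int\mu^{\otimes N}P^{in}(d\mu)$, to which one applies the Horowitz--Karandikar estimate conditionally on $\mu$ (as in the proof of Theorem~\ref{T-Chaotic}(c)) and then integrates against $P^{in}$, invoking (\ref{d+5MomPin}).

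For (b), the diagonal coupling gives $\DDist_{MK,1}(T^\infty_tP^{in},T^\infty_sP^{in})\le\int\Dist_{MK,1}(\Phi_t\mu,\Phi_s\mu)P^{in}(d\mu)$; the integrand tends to $0$ as $s\to t$ for each fixed $\mu$ (weak continuity of the Vlasov flow combined with the uniform first-moment bound (\ref{1stMom}) upgrades weak convergence to $\Dist_{MK,1}$-convergence), and is dominated on $[-T,T]$ by an integrable envelope coming again from (\ref{1stMom}), so dominated convergence concludes. For (c), each $\Phi_tf$ is a weak solution of (\ref{VlasovK}); applying the Leibniz rule to the tensor power $(\Phi_tf)^{\otimes m}$ gives, for every $\psi\in C^\infty_c((\bR^d)^m)$,
$$
\frac{d}{dt}\la(\Phi_tf)^{\otimes m},\psi\ra=\sum_{k=1}^m\la(\Phi_tf)^{\otimes(m+1)},K(z_k,z_{m+1})\cdot\grad_{z_k}\psi\ra,
$$
and integrating this identity against $P^{in}(df)$, with (\ref{d+5MomPin}) and (\ref{1stMom}) justifying differentiation under the integral sign, produces exactly the weak form of the infinite hierarchy (\ref{VlasovInfin}).

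The main obstacle is the approximation step in the uniqueness argument: producing a sequence $P_N^{in}\to P^{in}$ in $\DDist_{MK,1}$ requires a quantitative law of large numbers for empirical measures of iid samples drawn from a random law $\mu\sim P^{in}$. This is handled by combining Horowitz--Karandikar applied to each fiber $\mu^{\otimes N}$ with Fubini and the moment hypothesis (\ref{d+5MomPin}). Once this approximation and the coupling argument for (d) are in hand, everything else is mechanical.
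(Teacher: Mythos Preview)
Your proof is correct but follows a genuinely different route from the paper's. You define $T^\infty_t$ explicitly as the pushforward $\Phi_t\#$ under the Dobrushin flow on $\cP_1(\bR^d)$, and then verify all the properties directly; the key identification $\mu_{T^N_tZ_N}=\Phi_t\mu_{Z_N}$ from Proposition~\ref{P-Dobru}(a) makes (a) an immediate corollary of the contraction estimate (d), and (b)--(c) are handled by soft dominated-convergence arguments. The paper instead constructs $T^\infty_t$ \emph{abstractly}, as the Cauchy limit of $T^N_t\#P_N^{in}$ in the complete metric space $(\cP_1(\cP_1(\bR^d)),\DDist_{MK,1})$, with the Cauchy property supplied by Theorem~\ref{T-StabBBGKY}(a); statement (d) is then obtained by passing to the limit in the finite-$N$ stability estimate, and the identification $T^\infty_t=V_t\#$ is only recorded afterward (formula~(\ref{EqSpohn})) as a consequence of Spohn's theorem, deliberately kept separate from the proof.

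Your approach is shorter and more transparent, and in fact supplies an argument for (c) that the paper's proof omits. What the paper's construction buys is conceptual: it derives the limiting dynamics purely from uniform-in-$N$ stability of the BBGKY hierarchy, without presupposing the mean-field flow $\Phi_t$ as the answer---the whole point of the section is to show that the hierarchy route and the empirical-measure route lead to the same object. Your proof inverts that logic by taking $\Phi_t$ as primitive, which is perfectly legitimate for proving the theorem as stated but sidesteps the narrative the paper is building. The approximation step you flag (your $P_N^{in}=\int\mu^{\otimes N}P^{in}(d\mu)$) is exactly the content of Lemma~\ref{L-QNtoQ}, which the paper isolates and proves separately.
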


\smallskip
The generator of the $1$-parameter group $T^\infty_t$ can be computed explicitly; see Lemma 2.11 of \cite{MischMouhot} for a detailed description of this computation and of the pertaining
functional framework.

\smallskip
The definition of the $1$-parameter group $T^\infty_t$ in statement (a) above requires constructing at least one sequence $P^{in}_N$ converging to $P^{in}$ for the distance $\DDist_{MK,1}$. 
This can be done  as explained in the following lemma.

\begin{lemma}\label{L-QNtoQ}
For each $Q\in\cE$ and each $N\ge 1$, set
$$
Q_N:=\int_{\cP_1(\bR^d)}Q(df)f^{\otimes N}\in\cP_1((\bR^d)^N/\fS_N)\,.
$$
Then the sequence $\DDist_{MK,1}(Q_N,Q)$ converges to $0$ as $N\to\infty$.
\end{lemma}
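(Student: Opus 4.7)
The plan is to construct a natural coupling between $Q_N$ and $Q$ in which the first marginal is obtained from the second by sampling, and then reduce the lemma to the empirical-measure convergence result already invoked in the proof of Theorem \ref{T-Chaotic}(c).

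First, for each $N\ge 1$, I would define a coupling $\rho_N\in\cP(\cP_1(\bR^d)\times\cP_1(\bR^d))$ by the formula
\begin{equation*}
\la\rho_N,\chi\ra:=\int_{\cP_1(\bR^d)}Q(df)\int_{(\bR^d)^N}\chi(\mu_{Z_N},f)\,f^{\otimes N}(dZ_N)
\end{equation*}
for $\chi\in C_b(\cP_1(\bR^d)\times\cP_1(\bR^d))$. The second marginal of $\rho_N$ is $Q$ by construction, and its first marginal is precisely the push-forward of $Q_N$ under $Z_N\mapsto\mu_{Z_N}$, i.e.\ $Q_N$ itself under the identification introduced in section \ref{S-ContDep}. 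Hence $\rho_N\in\Pi(Q_N,Q)$ and the definition of $\DDist_{MK,1}$ yields
\begin{equation*}
\DDist_{MK,1}(Q_N,Q)\le\int_{\cP_1(\bR^d)}Q(df)\,I_N(f),\qquad I_N(f):=\int_{(\bR^d)^N}\Dist_{MK,1}(\mu_{Z_N},f)\,f^{\otimes N}(dZ_N).
\end{equation*}

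Next, I would show that $I_N(f)\to 0$ for $Q$-a.e.\ $f$. Since $Q\in\cE$, the integrability condition (\ref{d+5MomPin}) forces $\int_{\bR^d}|z|^{d+5}f(dz)<\infty$ for $Q$-a.e.\ $f$, so Theorem 1.1 of \cite{HoroKaran} applies exactly as in the proof of Theorem \ref{T-Chaotic}(c): combining it with the ordering $\Dist_{MK,1}\le\Dist_{MK,2}$ and the Cauchy--Schwarz inequality gives the pointwise bound $I_N(f)\le C(a(f),d)/N^{1/(d+4)}$, where $a(f)=\int|z|^{d+5}f(dz)$. In particular $I_N(f)\to 0$ as $N\to\infty$ for $Q$-a.e.\ $f$.

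Finally, I would pass to the limit under the integral by dominated convergence. The triangle inequality $\Dist_{MK,1}(\mu_{Z_N},f)\le\Dist_{MK,1}(\mu_{Z_N},\de_0)+\Dist_{MK,1}(f,\de_0)$ together with $\Dist_{MK,1}(\mu_{Z_N},\de_0)=\tfrac1N\sum_{k=1}^N|z_k|$ yields
\begin{equation*}
I_N(f)\le\frac1N\sum_{k=1}^N\int_{\bR^d}|z_k|f(dz_k)+\int_{\bR^d}|z|f(dz)=2\int_{\bR^d}|z|f(dz)=2\,\Dist_{MK,1}(f,\de_0),
\end{equation*}
and the right-hand side is $Q$-integrable because $Q\in\cP_1(\cP_1(\bR^d))$. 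The dominated convergence theorem then gives $\DDist_{MK,1}(Q_N,Q)\to 0$, which is the claim. The only delicate point is the pointwise convergence $I_N(f)\to 0$, but this is exactly the quantitative empirical-measure estimate already used earlier in the paper; everything else is bookkeeping with the coupling and an application of DCT.
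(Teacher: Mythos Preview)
Your proof is correct and follows essentially the same approach as the paper's: the same coupling $\rho_N(dZ_N,df)=f^{\otimes N}(dZ_N)Q(df)$, the same pointwise convergence $I_N(f)\to 0$ via \cite{HoroKaran} for $Q$-a.e.\ $f\in\cP_{d+5}(\bR^d)$, and the same dominated-convergence argument. Your dominating function $2\,\Dist_{MK,1}(f,\de_0)$ is obtained by a slightly different (and arguably cleaner) triangle inequality through $\de_0$ rather than through the $\de_{z_k}$'s, but the two bounds coincide and the rest is identical.
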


\smallskip
For each $f\in\cP_{d+5}(\bR^d)$, applying either Theorem 1.1 of \cite{HoroKaran} or the strong law of large numbers shows that the sequence $f^{\otimes N}$ (or equivalently its push-forward 
by the map $(\bR^d)^N\ni Z_N\mapsto\mu_{Z_N}\in\cP_1(\bR^d)$) converges to $\de_f$ for the distance $\DDist_{MK,1}$ as $N\to\infty$. Since any probability measure $Q\in\cE$ can be 
represented as a barycenter of $\de_f$ by the (tautological) formula
$$
Q=\int_{\cP_1(\bR^d)}Q(df)\de_f\,,
$$
it is natural to expect that $Q$ is the limit for $N\to\infty$ of the corresponding barycenters $Q_N$ of $f^{\otimes N}$ defined in Lemma \ref{L-QNtoQ}. The missing details are explained in the 
proof below.

\begin{proof}
Observe that $\rho(dZ_N,df):=f^{\otimes N}(dZ_N)Q(df)$ is a coupling of $Q_N$ and $Q$. Indeed, for each $\phi\in C_b((\bR^d)^N)$ and each $\Phi$ continuous on 
$\cP_1(\bR^d)$ for the distance $\Dist_{MK,1}$, one has
\begin{equation*}
\begin{aligned}
\int_{\cP_1(\bR^d)}\int_{(\bR^d)^N}&(\phi(Z_N)+\Phi(f))f^{\otimes N}(dZ_N)Q(df)
\\
=
&\int_{(\bR^d)^N}\phi(Z_N)\int_{\cP_1(\bR^d)}Q(df)f^{\otimes N}(dZ_N)
\\
&+\int_{\cP_1(\bR^d)}\Phi(f)\int_{(\bR^d)^N}f^{\otimes N}(dZ_N)Q(df)
\\
=
&\int_{(\bR^d)^N}\phi(Z_N)Q_N(dZ_N)+\int_{\cP_1(\bR^d)}\Phi(f)Q(df)\,.
\end{aligned}
\end{equation*}
Therefore
$$
\DDist_{MK,1}(Q_N,Q)\le\int_{\cP_1(\bR^d)}\int_{\bR^d}\Dist_{MK,1}(\mu_{Z_N},f)f^{\otimes N}(dZ_N)Q(df)\,.
$$
By Theorem 1.1 of \cite{HoroKaran}, 
$$
\int_{\bR^d}\Dist_{MK,1}(\mu_{Z_N},f)f^{\otimes N}(dZ_N)\to 0\hbox{ as }N\to\infty
$$
for all $f\in\cP_{d+5}(\bR^d)$, i.e. $Q$-a.e. in $\cP_1(\bR^d)$ in view of (\ref{d+5MomPin}). On the other hand
\begin{equation*}
\begin{aligned}
\Dist_{MK,1}(\mu_{Z_N},f)&\le\frac1N\sum_{k=1}^N\Dist_{MK,1}(\de_{z_k},f)
\\
&\le\frac1N\sum_{k=1}^N\Dist_{MK,1}(\de_{z_k},\de_0)+\Dist_{MK,1}(\de_0,f)
\\
\le\frac1N\sum_{k=1}^N|z_k|+\Dist_{MK,1}(\de_0,f)
\end{aligned}
\end{equation*}
so that
$$
\int_{\bR^d}\Dist_{MK,1}(\mu_{Z_N},f)f^{\otimes N}(dZ_N)\le\la f,|z|\ra+\Dist_{MK,1}(\de_0,f)\,.
$$
(In the first inequality above, we have used the fact that
$$
\Dist_{MK,1}((1-\th)\mu+\th\nu,\l)\le(1-\th)\Dist_{MK,1}(\mu,\l)+\th\Dist_{MK,1}(\nu,\l)
$$
for all $\l,\mu,\nu\in\cP_1(\bR^d)$, which is an obvious consequence of (\ref{DualKR}).) Since $Q\in\cE$, one has
$$
\int_{\cP_1(\bR^d)}(\la f,|z|\ra+\Dist_{MK,1}(\de_0,f))Q(df)<\infty\,.
$$
By dominated convergence
$$
\int_{\cP_1(\bR^d)}\int_{\bR^d}\Dist_{MK,1}(\mu_{Z_N},f)f^{\otimes N}(dZ_N)Q(df)\to 0\,,
$$
as $N\to\infty$, and this concludes the proof.
\end{proof}

\begin{proof}[Proof of Theorem \ref{T-StabInfin}]
Pick a sequence $P^{in}_N$ of elements of $\cP_1((\bR^d)^N/\fS_N)$ for each $N\ge 1$ such that $\DDist_{MK,1}(P^{in}_N,P^{in})\to 0$ as $N\to\infty$. (The construction in 
Lemma \ref{L-QNtoQ} provides one example of sequence $P^{in}_N$ whenever $P^{in}\in\cE$.) For each $t\in\bR$, we define $P_N(t):=T^N_t\#P_N^{in}$; we recall that 
$P_N(t)\in\cP_1((\bR^d)^N/\fS_N)$ by (\ref{SymPNt}). Since $P^{in}_N$ converges to $P^{in}$ for the distance $\DDist_{MK,1}$, it is in particular a Cauchy sequence for that 
distance. Thus $P_N(t)$ is a Cauchy sequence for the distance $\DDist_{MK,1}$ for each $t\in\bR$, by Theorem \ref{T-StabBBGKY} (a). Since $\cP_1(\bR^d)$ endowed 
with the distance $\Dist_{MK,1}$ is a complete space, the set $\cP_1(\cP_1(\bR^d))$ is a complete space for the distance $\DDist_{MK,1}$ (by Proposition 7.1.5 in \cite{AGS}). 
Therefore, there exists a unique map $\bR\ni t\mapsto P(t)\in\cP_1(\cP_1(\bR^d))$ such that $\DDist_{MK,1}(P_N(t),P(t))\to 0$ as $N\to\infty$ for each $t\in\bR$. Besides, the 
estimate in Theorem \ref{T-StabBBGKY} (a) shows that this convergence is uniform in $t\in[-T,T]$ for each $T>0$. Since $P_N$ is a continuous map on $\bR$ with values in 
$\cP_1(\cP_1(\bR^d))$ for the distance $\DDist_{MK,1}$, we conclude that $P$ is also continuous on $\bR$ with values in $\cP_1(\cP_1(\bR^d))$ for the distance $\DDist_{MK,1}$. 
This proves (a) and (b).

Let $P^{in},Q^{in}\in\cP_1(\cP_1(\bR^d))$, and let $P_N^{in}$ and $Q_N^{in}$ be sequences of elements of $\cP_1((\bR^d)^N/\fS_N)$ converging respectively to $P^{in}$ 
and $Q^{in}$ in $\cP_1(\cP_1(\bR^d))$ for the distance $\DDist_{MK,1}$ as $N\to\infty$. As explained in the discussion above, the sequences $P_N(t):=T^N_t\#P_N^{in}$ and 
$Q_N(t):=T^N_t\#Q_N^{in}$ converge to $P(t):=T^\infty_tP^{in}$ and $Q(t)=T^\infty_tQ^{in}\in\cP_1(\cP_1(\bR^d))$ uniformly in $t\in[-T,T]$ for all $T>0$ as  $N\to\infty$, and 
passing to the limit as $N\to\infty$ in the inequality of Theorem \ref{T-StabBBGKY} implies that
$$
\DDist_{MK,1}(P(t),Q(t))\le e^{2L|t|}\DDist_{MK,1}(P^{in},Q^{in})\,,\qquad t\in\bR\,.
$$
This proves (d). In particular $P(t)=Q(t)$ for all $t\in\bR$ if $P^{in}=Q^{in}$. In other words, the function $t\mapsto P(t)$ is uniquely determined by the initial condition $P^{in}$. 

Since $\DDist_{MK,1}(P_N(t),P(t))\to 0$ as $N\to\infty$, one has
$$
\int_{\cP_1(\bR^d)}\la f,(|z|\wedge R)^{d+5}\ra P_N(t,df)\to\int_{\cP_1(\bR^d)}\la f,(|z|\wedge R)^{d+5}\ra P(t,df)
$$
as $N\to\infty$ for all $R\ge 1$. Indeed, the map $f\mapsto\la f,(|z|\wedge R)^{d+5}\ra$ is Lipschitz continuous on $\cP_1(\bR^d)$ for the distance $\Dist_{MK,1}$, with
$$
|\la f-g,(|z|\wedge R)^{d+5}\ra|\le(d+5)R^{d+4}\Dist_{MK,1}(f,g)\,.
$$
On the other hand
\begin{equation*}
\begin{aligned}
\int_{\cP_1(\bR^d)}\la f,(|z|\wedge R)^{d+5}\ra P_N(t,df)&\le\int_{\cP_1(\bR^d)}\la f,|z|^{d+5}\ra P_N(t,df)
\\
&=\La\int_{\cP_1(\bR^d)}P_N(t,df)f,|z|^{d+5}\Ra
\\
&=\la P_{N:1}(t),|z|^{d+5}\ra
\\
&\le e^{2L(d+5)|t|}\la P_{N:1}(0),|z|^{d+5}\ra
\\
&=e^{2L(d+5)|t|}\int_{\cP_1(\bR^d)}\la f,|z|^{d+5}\ra P^{in}(df)
\end{aligned}
\end{equation*}
for all $N,R\ge 1$ and all $t\in\bR$ by (\ref{rMomPNm}). Applying Fatou's lemma shows that $P(t)\in\cE$ for all $t\in\bR$ whenever $P^{in}\in\cE$. Since 
$$
T^N_{t+s}\#P^{in}_N=T^N_t\#(T^N_s\#P^{in}_N)\quad\hbox{ for all }t,s\in\bR\hbox{ and all }N\ge 1\,,
$$
we conclude that $T^\infty_t$ defines a $1$-parameter group on $\cE$.
\end{proof}

\smallskip

\smallskip
To conclude this section, let us compare Theorem \ref{T-StabInfin} and Spohn's theorem \cite{Spohn}. 

Theorem \ref{T-StabInfin} implies that weak solutions of the infinite mean field hierarchy obtained as limits of the BBGKY hierarchy are uniquely determined by their initial condition. 

Spohn's theorem \cite{Spohn} states that any weak solution to the infinite mean field hierarchy that is weakly differentiable in the time variable $t$ is uniquely determined by its initial 
condition. To be more precise, a time-dependent sequence 
$$
\bR\ni t\mapsto(P_m(t))_{m\ge 1}\in\prod_{m\ge 1}\cP((\bR^d)^m/\fS_m)^{\bR}
$$ 
is said to be weakly differentiable if the map
$$
t\mapsto\int_{(\bR^d)^m}\phi_m(z_1,\ldots,z_m)P_m(t,dz_1\ldots dz_m)
$$
is differentiable on $\bR$ for all $m\ge 1$ and all $\phi_m\in C^1_c((\bR^d)^m)$. Such a sequence is said to be a weak solution to the infinite mean field hierarchy if
\begin{equation*}
\begin{aligned}
\frac{d}{dt}\int_{(\bR^d)^m}\phi_m(z_1,\ldots,z_m)P_m(t,dz_1\ldots dz_m)&
\\
=
\int_{(\bR^d)^m}\sum_{k=1}^mK(z_k,z_{m+1})\cdot\grad_{z_k}\phi_m(z_1,\ldots,z_m)P_{m+1}(t,dz_1\ldots dz_{m+1})&\,,
\end{aligned}
\end{equation*}
for all $t\in\bR$, all $m\ge 1$ and all $\phi_m\in C^1_c((\bR^d)^m)$.

In other words, let $(P_m)_{m\ge 1}$ be a weak solution to the infinite mean field hierarchy that is weakly differentiable in $t$ and satisfies $P_m(0)\in\cP_1((\bR^d)^m/\fS_m)$ for each 
$m\ge 1$. By the Hewitt-Savage theorem \cite{HewSav}, let $P^{in}$ be the unique element of $\cP(\cP_1(\bR^d))$ such that
\begin{equation}\label{CondInInfinHier}
P_m(0)=\int_{\cP_1(\bR^d)}P^{in}(df)f^{\otimes m}\quad\hbox{ for all }m\ge 1\,.
\end{equation}
Finally, let $V_t$ be the $1$-parameter group defined on $\cP_1(\bR^d)$ by the relation 
$$
V_t\mu^{in}:=\mu(t)\,,
$$
where $\mu$ is the solution to the Cauchy problem (\ref{VlasovMut}). Then one has
\begin{equation}\label{DefPmt}
P_m(t)=\int_{\cP_1(\bR^d)}P^{in}(df)(V_tf)^{\otimes m}
\end{equation}
for all $t\in\bR$ and all $m\ge 1$. (Indeed, the sequence $((V_tf)^{\otimes m})_{m\ge 1}$ is a solution to the infinite mean field hierarchy for each $f\in\cP_1(\bR^d)$. Hence, by linearity, the 
sequence on the right hand side of the formula above defines a weak solution to the infinite mean field hierarchy that is weakly differentiable in time and coincides with $(P_m(t))_{m\ge 1}$
for $t=0$. By Spohn's uniqueness theorem, these sequences must coincide for all $t\in\bR$.)

In the language of Theorem \ref{T-StabInfin}, Spohn's uniqueness theorem implies that 
\begin{equation}\label{EqSpohn}
T^\infty_tP^{in}=V_t\#P^{in}\,,\quad\hbox{ for all }t\in\bR\hbox{ and all }P^{in}\in\cE\,.
\end{equation}
Notice that Spohn's uniqueness theorem is not a consequence of our proof of Theorem \ref{T-StabInfin} (d). Since the stability inequalities in that theorem are obtained by passing to the large 
$N$ limit in the corresponding inequalities for the BBGKY hierarchy in Theorem \ref{T-StabBBGKY}, they apply only to solutions of the infinite hierarchy that are obtained as limits for $N\to\infty$ 
of solutions of the $N$-particle BBGKY hierarchy.

Of course, with the additional information contained in Spohn's uniqueness theorem, the stability estimate in Theorem \ref{T-StabInfin} (d) holds for all weak solutions of the infinite mean field 
hierarchy that are weakly differentiable in time, and whose initial condition is defined by (\ref{CondInInfinHier}) with $P^{in}\in\cE$.

Let us explain how the discussion in this section is related to the notion of ``statistical solution'' of the mean field PDE (\ref{VlasovK}). This notion is very clearly explained by Spohn (see
\cite{Spohn} on p. 448, especially formulas (1.16)-(1.19)). We briefly recall Spohn's point of view for the reader's convenience. Let $v\in C^1(\bR^n;\bR^n)$ satisfy $|v(x)|=O(|x|)$ as 
$|x|\to\infty$; by the Cauchy-Lipschitz theorem, the vector field $v$ generates a global flow $X_t$ on $\bR^N$. In other words, for each $x\in\bR^N$, the solution of the Cauchy problem
\begin{equation}\label{ODE}
\dot{X}=v(X)\,,\qquad X(0)=x
\end{equation}
is the trajectory $t\mapsto X_t(x)$ going through $x$ at time $t=0$. Consider instead of a single initial point $x$ a cloud of initial data distributed under $p^{in}\in\cP(\bR^n)$. It is transported 
by the flow into a cloud of points which, at time $t$, are distributed under $X_t\#p^{in}$. It is therefore natural to think of the map $t\mapsto X_t\#p^{in}$ as a ``statistical solution'' of the ODE
(\ref{ODE}). If one replaces $\bR^n$ with $\cP_1(\bR^d)$ and the ordinary differential equation (\ref{ODE}) with the Vlasov equation (\ref{VlasovMut}), it is equally natural to think of the map
$t\mapsto V_t\#P^{in}$ as the statistical solution of the Vlasov equation (\ref{VlasovMut}) starting from $P^{in}\in\cE$ at time $t=0$. Then the equality (\ref{EqSpohn}) implies that our Theorem
\ref{T-StabInfin} establishes the Lipschitz continuous dependence on the initial data of statistical solutions of the Vlasov equation in the distance $\DDist_{MK,1}$.


\section{Appendix: Spohn's uniqueness theorem}


For the sake of being complete, we recall Spohn's uniqueness theorem \cite{Spohn}, and briefly sketch its proof.

\begin{theorem}
Let $\bR\ni t\mapsto P(t)\in\cE$ be such that the sequence $t\mapsto(P_m)_{m\ge 1}$ defined by 
\begin{equation}\label{Def2Pmt}
P_m(t):=\int_{\cP_1(\bR^d)}f^{\otimes m}P(t,df)
\end{equation}
is a weakly differentiable in $t\in\bR$ weak solution to the infinite mean field hierarchy (\ref{VlasovInfin}). Then
$$
P(t)=V_t\#P(0)\quad\hbox{ for all }t\in\bR
$$
where $V_t$ is the $1$-parameter group such that $t\mapsto V_t\mu^{in}$ is the solution to the Cauchy problem (\ref{VlasovMut}).
\end{theorem}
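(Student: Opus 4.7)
The plan is to (i) exhibit $\tilde P(t):=V_t\#P(0)$ as a weakly differentiable weak solution of the infinite hierarchy with the prescribed initial marginals, and (ii) prove that any two such solutions with identical initial data must agree. Combining (i) and (ii) with the Hewitt--Savage theorem (which says that a symmetric consistent sequence of marginals is uniquely represented by an element of $\cP(\cP_1(\bR^d))$, hence $P(t)$ is determined by $(P_m(t))_{m\ge 1}$) yields $P(t)=V_t\#P(0)$.

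For (i), I would observe that for each $f\in\cP_1(\bR^d)$ the curve $t\mapsto V_tf$ solves the Vlasov equation (\ref{VlasovMut}), so the tensor powers $(V_tf)^{\otimes m}$ satisfy the $m$-th equation of the hierarchy (\ref{VlasovInfin}) (a short direct check using the Leibniz rule and the fact that $\Div_{z_k}((V_tf)^{\otimes m}\cK V_tf(z_k))=\sum_{k=1}^m\Div_{z_k}\int K(z_k,z_{m+1})(V_tf)^{\otimes (m+1)}(dz_{m+1})$ for tensorized measures). Integrating against $P(0,df)$ and invoking dominated convergence, justified by the moment estimate
$$
\int_{\cP_1(\bR^d)}\la V_tf,|z|^r\ra P(0,df)\le e^{2Lr|t|}\int_{\cP_1(\bR^d)}\la f,|z|^r\ra P(0,df)
$$
coming from (\ref{1stMom}) and the assumption $P(0)\in\cE$, shows that $\tilde P_m(t):=\int (V_tf)^{\otimes m}P(0,df)$ is a weakly differentiable weak solution with $\tilde P_m(0)=P_m(0)$.

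For (ii), let $R_m(t):=P_m(t)-\tilde P_m(t)$; then $(R_m)_{m\ge 1}$ is a weakly differentiable weak solution of the \emph{linear} hierarchy with $R_m(0)=0$. Testing the hierarchy against $\phi_m\in C^1_c((\bR^d)^m)$ and integrating in time yields the Duhamel identity
$$
\la R_m(t),\phi_m\ra=\int_0^t\la R_{m+1}(s),T_m\phi_m\ra ds,\qquad (T_m\phi_m)(Z_{m+1}):=\sum_{k=1}^mK(z_k,z_{m+1})\cdot\grad_{z_k}\phi_m(Z_m).
$$
Iterating $K$ times produces
$$
\la R_m(t),\phi_m\ra=\int_{\Delta_K(t)}\la R_{m+K}(s_K),T_{m+K-1}\cdots T_m\phi_m\ra ds_K\cdots ds_1,
$$
where $\Delta_K(t):=\{0\le s_K\le\cdots\le s_1\le t\}$. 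It then suffices to show the right-hand side vanishes as $K\to\infty$ for each $t\in\bR$ and sufficiently rich test function $\phi_m$.

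The main obstacle is the control of the iterated transport $T_{m+K-1}\cdots T_m\phi_m$: each factor $K$ grows linearly in positions and differentiating successive $K$'s requires using that $\grad K$ is bounded by $L$ (available a.e.\ by Rademacher's theorem under (\ref{LipK})). The combinatorics gives $(m+K-1)!/(m-1)!$ terms, each of size $\lesssim L^K$ times a polynomial of degree $K$ in the variables, times derivatives of $\phi_m$ up to order $K$. Paired against $R_{m+K}(s_K)$, whose total variation is bounded by $2$ and whose higher-order moments are controlled uniformly in $m$ and on compact time intervals by the hierarchy analog of (\ref{rMomPNm}) applied to both $P_m$ and $\tilde P_m$ (using $P(0)\in\cE$), and integrated over $\Delta_K(t)$ of volume $t^K/K!$, the remainder is $O((Ct)^K/K!^{\,1-\eta})$ for appropriate $\eta$; to obtain this cleanly I would first mollify $K$ into $K_\eps\in C^\infty\cap\Lip$, work with $\phi_m$ of bounded Fourier support to convert derivatives into bounded multipliers in the spirit of Spohn, and then pass to the limit $\eps\to 0$ using the $\DDist_{MK,1}$ stability from Theorem~\ref{T-StabBBGKY}. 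Vanishing of the remainder on a dense class of $\phi_m$ forces $R_m(t)=0$ for all $t$ and $m$, which by Hewitt--Savage gives $P(t)=\tilde P(t)=V_t\#P(0)$.
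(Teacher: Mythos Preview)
Your step (i) is correct and matches the brief argument in the paper around (\ref{DefPmt}). The gap is in step (ii): the Duhamel iteration cannot be closed under the sole hypothesis (\ref{LipK}). Each application of $T_j$ differentiates in the variables $z_1,\ldots,z_j$; since the previous step has already produced factors $K(z_k,z_{j})$ depending on these variables, the derivatives land on $K$ as well as on $\phi_m$. After $K$ iterations the expression $T_{m+K-1}\cdots T_m\phi_m$ involves derivatives of the kernel up to order $K-1$, and with $K$ merely Lipschitz only $\nabla K\in L^\infty$ is available---$\nabla^2K$ is a distribution, not a function. No factorial gain from the simplex can compensate for an object that is not even defined pointwise. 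This is precisely the obstruction that confines the Narnhofer--Sewell iteration to analytic potentials, as the paper recalls. Your proposed fix does not work either: $P_m$ and $\tilde P_m$ solve the hierarchy with the \emph{original} kernel $K$, so $R_m$ does not satisfy the mollified linear hierarchy and the Duhamel identity with $K_\eps$ is false for it; even if one could set up an approximation, the iterated bounds blow up like $\eps^{-(K-1)}$. And Theorem~\ref{T-StabBBGKY} is a stability estimate in the particle number for a \emph{fixed} interaction---it says nothing about perturbing $K$.

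The paper (following Spohn) circumvents the loss of derivatives by a different mechanism: it tests the hierarchy against $\phi_m\circ T^n_s$, where $T^n_s$ is the $n$-particle flow, and exploits the identity between the $n$-particle Liouville generator $L_n$ and the empirical-measure average of the hierarchy operator $A_n$ (formulas (\ref{DefLn})--(\ref{1sttermMn+1})). One then needs only a bound, uniform in $n$, on $\sum_{k=1}^n\|\partial_{z_k}(\phi_m\circ T^n_s)\|_{L^\infty}$, which follows from a Gronwall argument on the Jacobian of $T^n_s$ using nothing more than the Lipschitz constant $L$ of $K$; higher derivatives of $K$ never appear. This yields $(\partial_s-\partial_t)\Phi=0$ for $\Phi(s,t)=\la V_s\#P(t),M_m[\phi_m]\ra$, hence $\la P(t),M_m[\phi_m]\ra=\la V_t\#P(0),M_m[\phi_m]\ra$, and Stone--Weierstrass concludes.
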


\smallskip
Returning to the analogy between the ODE (\ref{ODE}) and the mean field PDE (\ref{VlasovK}) recalled at the end of section \ref{S-ContDep}, weakly differentiable weak solutions of the infinite 
mean field hierarchy are analogous to weak solutions of the transport equation
\begin{equation}\label{TranspPDE}
\d_tp+\Div_x(pv)=0\,,\qquad p\rstr_{t=0}=p^{in}\,.
\end{equation}
In this analogy, Spohn's uniqueness theorem is analogous to the method of characteristics, from which we conclude that the unique weak solution of (\ref{TranspPDE}) is given by the formula 
$p(t)=X_t\#p^{in}$. Thus Spohn's theorem can be rephrased as follows: any weakly differentiable weak solution $(P_m)_{m\ge 1}$ of the infinite mean field hierarchy is represented by the 
Hewitt-Savage theorem (i.e. formula (\ref{Def2Pmt})) in terms of a unique $\cE$-valued map $t\mapsto P(t)$ that is is a statistical solution of the mean field PDE.

\smallskip
The argument below is essentially similar to the original proof on pp. 449--453 in \cite{Spohn}, although slightly simpler in places.

\begin{proof}
For each $m\ge 1$ and each $\phi_m\in C_b((\bR^d)^m)$, we denote by $M_m[\phi_m]$ the monomial defined on $\cP_1(\bR^d)$ by the formula
$$
M_m[\phi_m](f):=\int_{(\bR^d)^m}\phi_m(z_1,\ldots,z_m)f(dz_1)\ldots f(dz_m)\,.
$$
For $m=0$, we use the notation $M_{0}(f):=1$ for all $f\in\cP_1(\bR^d)$. 

We also need the notation
$$
A_n[\zeta]:=\sum_{k=1}^nK(z_k,\zeta)\cdot\grad_{z_k}\,,\quad n\ge 1\,,
$$
together with
\begin{equation}\label{DefLn}
L_n:=\frac1n\sum_{1\le k,l\le n}K(z_k,z_l)\cdot\grad_{z_k}=\frac1n\sum_{l=1}^nA_n[z_l]\,.
\end{equation}
Thus, the $m$-th equation in the infinite mean field hierarchy (\ref{VlasovInfin}) takes the form
$$
\d_tP_m(t)=\int_{\bR^d}A_n^*[z_{m+1}]P_{m+1}(dz_{m+1})\,,
$$
where $A_n^*[\zeta]$ designates the (formal) adjoint of $A_n[\zeta]$.

For each $n\ge m\ge 1$ and each $\phi_m\in C^1_c((\bR^d)^m)$, consider the function
$$
\Phi_n(s,t):=\la P(t),M_n[\phi_m\circ T^n_s]\ra=\int_{(\bR^d)^n}\phi_m\circ T^n_s(Z_n)P_n(t,dZ_n)\,.
$$
(In the expression $\phi_m\circ T^n_s$, the function $\phi_m$ is viewed as function of $n\ge m$ variables.)

Observe first that
\begin{equation}\label{dsPhin}
\d_s\Phi_n(s,t)=\la P(t),M_n[\d_s(\phi_m\circ T^n_s)]\ra=\la P(t),M_n[L_n(\phi_m\circ T^n_s)]\ra\,.
\end{equation}
On the other hand, since $P_m$ is a weak solution to the $m$-th equation in the infinite mean field hierarchy (\ref{VlasovInfin}), one has
\begin{equation*}
\begin{aligned}
\d_t\Phi_n(s,t)&=\int_{(\bR^d)^{n+1}}A_n[z_{n+1}](\phi_m\circ T^n_s)(Z_n)P_{n+1}(t,dZ_{n+1})
\\
&=\la P(t),M_{n+1}[A_n(\phi_m\circ T^n_s)]\ra\,.
\end{aligned}
\end{equation*}
Splitting the monomial $M_{n+1}[A_n(\phi_m\circ T^n_s)](f)$ into
\begin{equation}\label{DecompMn+1}
\begin{aligned}
M_{n+1}[A_n(\phi_m\circ T^n_s)](f)&
\\
=\int_{(\bR^d)^n}\int_{\bR^d}A_n[z_{n+1}](\phi_m\circ T^n_s)(Z_n)\mu_{Z_n}(dz_{n+1})f^{\otimes n}(dZ_n)&
\\
+
\int_{(\bR^d)^n}\int_{\bR^d}A_n[z_{n+1}](\phi_m\circ T^n_s)(Z_n)(f-\mu_{Z_n})(dz_{n+1})f^{\otimes n}(dZ_n)&\,,
\end{aligned}
\end{equation}
and observing that
\begin{equation}\label{1sttermMn+1}
\begin{aligned}
\int_{(\bR^d)^n}\int_{\bR^d}A_n[z_{n+1}](\phi_m\circ T^n_s)(Z_n)\mu_{Z_n}(dz_{n+1})f^{\otimes n}(dZ_n)&
\\
=
\int_{(\bR^d)^n}L_n(\phi_m\circ T^n_s)(Z_n)f^{\otimes n}(dZ_n)=M_n[L_n(\phi_m\circ T^n_s)]&\,,
\end{aligned}
\end{equation}
we conclude from (\ref{dsPhin}) that
\begin{equation}\label{dstPhin}
\begin{aligned}
(\d_t-\d_s)\Phi_n(s,t)&
\\
=\int_{\cP_1(\bR^d)}\int_{(\bR^d)^n}\int_{\bR^d}A_n[z_{n+1}](\phi_m\circ T^n_s)(Z_n)(f-\mu_{Z_n})(dz_{n+1})f^{\otimes n}(dZ_n)P(t,df)&\,.
\end{aligned}
\end{equation}

First
\begin{equation*}
\begin{aligned}
M_n[\phi_m\circ T^n_s](f)&=\int_{(\bR^d)^n}\phi_m(Z_m)(T^n_s\#f^{\otimes n})(dZ_n)
\\
&=\int_{(\bR^d)^m}\phi_m(Z_m)(T^n_s\#f^{\otimes n})_{:m}(dZ_m)
\\
&\to\int_{(\bR^d)^m}\phi_m(Z_m)(V_sf)^{\otimes m}(dZ_m)
\end{aligned}
\end{equation*}
as $n\to\infty$, by Theorem \ref{T-Chaotic} (b). Hence
\begin{equation}\label{PhintoPhi}
\begin{aligned}
\Phi_n(s,t)\to&\int_{\cP_1(\bR^d)}\int_{(\bR^d)^m}\phi_m(Z_m)(V_sf)^{\otimes m}(dZ_m)P(t,df)
\\
&=\la V_s\#P(t),M_m[\phi_m]\ra=:\Phi(s,t)
\end{aligned}
\end{equation}
for all $s,t\in\bR$ as $n\to\infty$. Besides $\|\Phi_n\|_{L^\infty(\bR^2)}\le\|\phi_m\|_{L^\infty((\bR^d)^m)}$ for all $n\ge 1$ so that the convergence(\ref{PhintoPhi}) holds in the sense of 
distributions on $\bR^2$.

On the other hand
\begin{equation*}
\begin{aligned}
\left|\int_{(\bR^d)^n}\int_{\bR^d}A_n[z_{n+1}](\phi_m\circ T^n_s)(Z_n)(f-\mu_{Z_n})(dz_{n+1})f^{\otimes n}(dZ_n)\right|&
\\
=
\left|\int_{(\bR^d)^n}\int_{\bR^d}\sum_{k=1}^n\d_{z_k}(\phi_m\circ T^n_s)(Z_n)\cdot\cK(f-\mu_{Z_n})(z_k)f^{\otimes n}(dZ_n)\right|&
\\
\le
\left\|\sum_{k=1}^n|\d_{z_k}(\phi_m\circ T^n_s)\right\|_{L^\infty((\bR^d)^n)}\int_{(\bR^d)^n}\int_{\bR^d}|\sup_{z\in\bR^d}|\cK(f-\mu_{Z_n})(z_k)|f^{\otimes n}(dZ_n)&
\\
\le
L\left\|\sum_{k=1}^n|\d_{z_k}(\phi_m\circ T^n_s)\right\|_{L^\infty((\bR^d)^n)}\int_{(\bR^d)^n}\int_{\bR^d}\Dist_{MK,1}(f,\mu_{Z_n})f^{\otimes n}(dZ_n)&\,.
\end{aligned}
\end{equation*}
Since $P(t)\in\cE$, by the same argument as in the proof of Lemma \ref{L-QNtoQ}, one has
$$
\int_{\cP_1(\bR^d)}\int_{(\bR^d)^n}\int_{\bR^d}\Dist_{MK,1}(f,\mu_{Z_n})f^{\otimes n}(dZ_n)P(t,df)\to 0
$$
as $n\to\infty$. Hence
$$
(\d_s-\d_t)\Phi_n(s,t)\to 0\quad\hbox{�uniformly in }(s,t)\in[-T,T]\times\bR
$$
for all $T>0$ as $n\to\infty$, provided that
\begin{equation}\label{BoundSpohn}
\sup_{|s|\le T}\left\|\sum_{k=1}^n|\d_{z_k}(\phi_m\circ T^n_s)\right\|_{L^\infty((\bR^d)^n)}=O(1)\hbox{ as }n\to\infty\,.
\end{equation}
 
Taking this for granted, we have proved\footnote{By considering $\Phi_n$ instead of $\Phi$, we have avoided computing explicitly $\d_t\Phi$ as in \cite{Spohn}. The identity 
(\ref{1sttermMn+1}) shows that the first term in the decomposition (\ref{DecompMn+1}) exactly cancels with the expression under the bracket on the right hand side of (\ref{dsPhin}). Thus 
we do not need to pass to the large $n$ limit in these expression, so that the discussion in formulas (2.11-17) and (2.27) of \cite{Spohn} becomes useless. The analysis in formulas (2.17-24)
in \cite{Spohn} is roughly equivalent to our proof that $(\d_t-\d_s)\Phi_n\to 0$. Our proof of (\ref{BoundSpohn}) is essentially equivalent to the analysis in formulas (2.18-22) of \cite{Spohn}.} 
that
$$
(\d_s-\d_t)\Phi(s,t)=0\,,\quad (s,t)\in\bR^2\,,
$$
on account of (\ref{PhintoPhi}). Thus the function $s\mapsto\Phi(s,t-s)$ is constant for all $t\in\bR$ and
\begin{equation}\label{SpohnEqual}
\la P(t),M_m[\phi_m]\ra=\Phi(t,0)=\Phi(0,t)=\la V_t\#P(0),M_m[\phi_m]\ra\,.
\end{equation}
By the Stone-Weierstrass theorem, the algebra
$$
\bR\oplus\Span\{M_m[\phi_m]\hbox{ s.t. }m\ge 1\hbox{ and }\phi_m\in C^1_c((\bR^d)^m)\}
$$
is dense in $C(\cP_1(\bR^d))$, so that (\ref{SpohnEqual}) implies the conclusion of Spohn's theorem.

It remains to establish the estimate (\ref{BoundSpohn}) above. By the chain rule
$$
\d_{z_k}(\phi_m\circ T^n_s)(Z_n)=\sum_{l=1}^m\d_{z_l}\phi_m(T^n_sZ_n)a_{lk}(s,Z_n)
$$
where $a_{lk}(s,Z_n)$ is the entry of the Jacobian matrix of $T^n_s$ at $Z_n$ on the $l$-th row and the $k$-th column. Differentiating each side of the $l$-th equation in (\ref{NODE}) with 
respect to the $k$-th component of the initial condition, one arrives at the inequality satisfied by the quantity $\a_{lk}(s):=\|a_{lk}(s,\cdot)\|_{L^\infty((\bR^d)^n)}$:
$$
\a_{lk}(t)\le\de_{lk}+\frac{L}n\sum_{j=1}^n\int_0^t(\a_{lk}(s)+\a_{jk}(s))ds\,,\quad k,l=1,\ldots,n\,,\,\,t\in\bR\,.
$$
In particular
$$
\b_k(t):=\frac1n\sum_{l=1}^n\a_{lk}(t)
$$
satisfies the inequality
$$
\b_k(t)\le\frac1n+2L\int_0^t\b_k(s)ds\,,\quad k=1,\ldots,n\,,\,\,t\in\bR\,,
$$
so that
$$
\b_k(t)\le\frac1ne^{2L|t|}
$$
by Gronwall's lemma. Therefore
\begin{equation*}
\begin{aligned}
\a_{lk}(t)&\le\de_{lk}+L\int_0^t\b_k(s)ds+L\int_0^t\a_{lk}(s)ds
\\
&\le\de_{lk}+\frac{e^{2L|t|}-1}{2n}+L\int_0^t\a_{lk}(s)ds\,,
\end{aligned}
\end{equation*}
and applying Gronwall's lemma again leads to the bound
$$
\a_{lk}(t)\le\de_{lk}e^{L|t|}+\frac1{2n}{e^{3L|t|}}\quad k=1,\ldots,n\,,\,\,t\in\bR\,.
$$
Thus
\begin{equation*}
\begin{aligned}
\|\d_{z_k}(\phi_m\circ T^n_s)(Z_n)\|_{L^\infty((\bR^d)^n)}\le\sum_{l=1}^m\a_{lk}(s)\|\d_{z_l}\phi_m\|_{L^\infty((\bR^d)^m)}&
\\
\le e^{L|s|}\|\d_{z_k}\phi_m\|_{L^\infty((\bR^d)^m)}+\frac1{2n}{e^{3L|t|}}\sum_{l=1}^m\|\d_{z_l}\phi_m\|_{L^\infty((\bR^d)^m)}&\,,
\end{aligned}
\end{equation*}
so that
$$
\left\|\sum_{k=1}^n|\d_{z_k}(\phi_m\circ T^n_s)\right\|_{L^\infty((\bR^d)^n)}
\le
(e^{L|s|}+\tfrac12e^{3L|s|})\sum_{k=1}^m\|\d_{z_k}\phi_m\|_{L^\infty((\bR^d)^m)}\,.
$$
\end{proof}


\end{document}